\newcommand{\A}{\mathcal{A}}
\newcommand{\ot}{\mathrm{ot}}
\newcommand{\ro}{\mathrm{ro}}
\newcommand{\baafa}{BA\!AFA}
\newcommand{\id}{\mathrm{id}}
\newcommand{\forclass}{\mathcal{S}}
\newcommand{\sigmaclosedccc}{$\sigma$-closed\,$*$\,ccc}
\newtheorem{theorem}{Theorem}[section]
\newtheorem{lemma}[theorem]{Lemma}
\newtheorem{sublemma}[theorem]{Sublemma}
\newtheorem{question}[theorem]{Question}
\newtheorem{proposition}[theorem]{Proposition}
\newtheorem{corollary}[theorem]{Corollary}
\theoremstyle{definition}
\newtheorem{definition}[theorem]{Definition}
\title{Bounded forcing axioms and Baumgartner's conjecture}
\author[D. Asper\'o]{David Asper\'o}\thanks{One visit of
  David Asper\'{o} at the KGRC in Vienna was supported by
  the INFTY programme of the ESF through an exchange grant
  and by the University of Vienna through a short research
  contract.}
\address{David Asper\'o, Departamento de Matem\'aticas,
  Universidad Nacional de Colombia, Av. Cra 30 \# 45-03,
  Bogot\'a, Colombia }
\email{daasperohe@unal.edu.co}
\author[S.-D. Friedman]{Sy-David Friedman}\thanks{Sy-David
  Friedman, Miguel Angel Mota and Marcin Sabok would like to
  thank the FWF (the Austrian Science Fund) for its support
  through the grant no P 22430-N13.}
\address{Sy-David Friedman, Kurt G\"odel Research Center for
  Mathematical Logic, W\"ahringer Stra\ss e 25, 1090 Wien,
  Austria}
\email{sdf@logic.univie.ac.at}
\author[M.A. Mota ]{Miguel Angel Mota} 
\address{Miguel Angel Mota, Kurt G\"odel Research Center for
  Mathematical Logic, W\"ahringer Stra\ss e 25, 1090 Wien,
  Austria}
\email{motagaytan@gmail.com}
\author[M. Sabok]{Marcin Sabok}\thanks{Marcin Sabok was
  partially supported by MNiSW (the Polish Ministry of
  Science and Higher Education) through grant number N N201
  418939. Two visits of Marcin Sabok at the Kurt G\"odel
  Research Center in Vienna were supported by the the INFTY
  programme of the European Science Foundation through
  exchange grants no 2535 and 3010 and by the
  Austrian-Polish bilateral grant no 01/2009.}
\address{Marcin Sabok, Instytut Matematyczny Uniwersytetu
  Wroc\l awskiego, pl. Grunwaldzki 2/4, 50-384 Wroc\l aw,
  Poland and Instytut Matematyczny Polskiej Akademii Nauk,
  ul. \'Sniadeckich 8, 00-956 Warszawa, Poland}
\email{sabok@math.uni.wroc.pl}
\begin{document}

\maketitle

\begin{abstract}
  We study the spectrum of forcing notions between the
  iterations of $\sigma$-closed followed by ccc forcings and
  the proper forcings. This includes the hierarchy of
  $\alpha$-proper forcings for indecomposable countable
  ordinals $\alpha$ as well as the Axiom A forcings. We
  focus on the bounded forcing axioms for the hierarchy of
  $\alpha$-proper forcings and connect them to a hierarchy
  of weak club guessing principles. We show that they are,
  in a sense, dual to each other. In particular, these weak
  club guessing principles separate the bounded forcing
  axioms for distinct countable indecomposable ordinals. In
  the study of forcings completely embeddable into an
  iteration of $\sigma$-closed followed by ccc forcing, we
  present an equivalent characterization of this class in
  terms of Baumgartner's Axiom A. This resolves a well-known
  conjecture of Baumgartner from the 1980's.
\end{abstract}

\section{Introduction}
\label{sec:intro}

After the discovery of finite support iteration
\cite{solovay:tennenbaum} and Martin's Axiom
\cite{martin:solovay}, the technique of iterated forcing was
dramatically extended through consideration of iterations
with countable support. The classical paper of Baumgartner
and Laver \cite{baumgartner:laver} on countable support
iterations of Sacks forcing was developed further by
Baumgartner into the theory of Axiom A forcing
\cite{baumgartner}. Baumgartner's Axiom A captures many of
the common features of ccc, $\sigma$-closed and tree-like
forcings and is sufficient to guarantee that $\omega_1$ is
not collapsed in a countable support iteration.  The more
general theory of proper forcing was later developed by
Shelah \cite{shelah:proper} and has replaced Axiom A as the
central notion in the theory of iterated forcing with
countable support.

Together with the introduction of proper forcing, Shelah
also considered the notion of $\alpha$-proper forcing
\cite[Chapter V]{shelah:proper} for indecomposable countable
ordinals $\alpha$. Forcings which are $\alpha$-proper for
all countable ordinals are called $<\!\!\omega_1$-proper.
Years later, Ishiu \cite{ishiu} proved the striking result
that the notions of Axiom A and $<\!\!\omega_1$-properness
are in fact the same, meaning that, up to
forcing-equivalence, they describe the same classes of
quasi-orders.  This also explained an earlier result of
Koszmider \cite{koszmider} saying that Axiom A is preserved
by countable-support iteration.

Baumgartner showed that the analogue of Martin's Axiom for
proper forcing, called PFA (the Proper Forcing Axiom) is
consistent relative to a supercompact cardinal and it is
conjectured that its consistency strength is exactly that.
PFA and the forcing axioms for the classes of
$\alpha$-proper forcings (written as PFA$_\alpha$) were
later systematically studied by Shelah \cite{shelah:proper}.
However, a still very useful weakening of PFA considered by
Goldstern and Shelah \cite{goldstern:shelah} and called BPFA
(the Bounded Proper Forcing Axiom) turned out to have much
lower consistency strength, below that of a Mahlo cardinal.
In addition, some important consequences of PFA, such as the
Todor\v{c}evi\'c--Veli\v{c}kovi\'c result that c =
$\aleph_2$ holds under PFA, were shown to also follow from
BPFA \cite{moore}. On the other hand, one should remember
that the proof of Todor\v{c}evi\'c--Veli\v{c}kovi\'c in fact
only uses FA(\sigmaclosedccc), i.e. the forcing axiom for
the class of forcings completely embeddable into an
iteration of $\sigma$-closed followed by ccc forcing. We
will say that a forcing is \textit{embeddable into
  \sigmaclosedccc} if it is forcing-equivalent to a forcing
which can be completely embedded into an iteration of
$\sigma$-closed followed by ccc forcing.

Given a class of forcing notions $\forclass$, the
\textit{Bounded Forcing Axiom for $\forclass$}, denoted by
BFA($\forclass$), is the statement that for each complete
Boolean algebra $B$ in $\forclass$ and any collection
$\mathcal{D}$ of $\omega_1$-many size at most $\omega_1$
predense subsets of $B$, there is a filter on B which
intersects each element of $\mathcal{D}$. An equivalent form
of of BFA($\forclass$), due independently to Bagaria
\cite{bagaria} and Stavi--V\"a\"an\"anen
\cite{stavi:vaananen}, states that $H(\omega_2)^V$ is
$\Sigma_1$-elementary in $H(\omega_2)^{V^B}$ for any
complete Boolean algebra $B$ in $\forclass$.

The Bounded Forcing Axiom for the class of ccc forcing
notions is equivalent to Martin's Axiom and the Bounded
Forcing Axiom for the class of proper forcings is exactly
BPFA. In fact, there is a whole spectrum of forcing axioms,
namely the Bounded Forcing Axioms for the classes of
$\alpha$-proper forcing notions (written as BPFA$_\alpha$),
where $\alpha$ can be any countable indecomposable ordinal.
There is also the Bounded Forcing Axiom for
$<\!\!\omega_1$-proper forcing notions, which is (a priori)
weaker than all BPFA$_\alpha$. By the result of Ishiu, it is
equivalent to the Bounded Forcing Axiom for the class of
Axiom A forcings, also denoted by \baafa. A still (a priori)
weaker variation is the Bounded Forcing Axiom for the class
of forcings embeddable into \sigmaclosedccc. We denote this
axiom by BFA(\sigmaclosedccc). Remarkably,
Todor\v{c}evi\'c showed (see \cite{todorcevic} or
\cite[Lemma 2.4]{aspero}) that the consistency strength of
BFA(\sigmaclosedccc) is the same as of BPFA, i.e. a
reflecting cardinal. This implies that actually all the
axioms along this hierarchy have the same consistency
strength.

In \cite{weinert} Weinert showed that \baafa\ is strictly
weaker than BPFA, relative to a reflecting cardinal. In this
paper we separate the axioms BPFA$_\alpha$ for all
indecomposable countable ordinals. We consider a hierarchy
of weak club guessing principles TWCG$_\alpha$ (for a
definition see Section \ref{sec:separation}) and show the
following
\begin{theorem}\label{separation}
  For indecomposable ordinals $\alpha<\beta<\omega_1$ the
  axiom \textnormal{BPFA}$_\alpha$ (or
  \textnormal{PFA}$_\alpha$) is consistent with
  \textnormal{TWCG}$_\alpha$, relative to a reflecting
  cardinal (or a supercompact), whereas
  \textnormal{BPFA}$_\alpha$ is inconsistent with
  \textnormal{TWCG}$_\beta$.
\end{theorem}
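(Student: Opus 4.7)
The plan splits naturally into a consistency direction and an inconsistency direction.

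For the consistency of $\mathrm{BPFA}_\alpha$ (resp.\ $\mathrm{PFA}_\alpha$) together with $\mathrm{TWCG}_\alpha$, the approach is the standard countable-support iteration from a reflecting cardinal (resp.\ from a supercompact via a Laver diamond) used to force $\mathrm{BPFA}_\alpha$, with the single extra ingredient that $\mathrm{TWCG}_\alpha$ must be preserved throughout. The whole content therefore lies in a preservation lemma of the form: \emph{every $\alpha$-proper forcing preserves $\mathrm{TWCG}_\alpha$, and countable-support iterations of $\alpha$-proper forcings preserve $\mathrm{TWCG}_\alpha$.} The single-step preservation should be proved by the usual genericity argument along a continuous $\in$-tower $(N_\xi : \xi<\alpha)$ of countable elementary submodels: a sufficiently $(N_\xi)$-generic condition will leave any given $\mathrm{TWCG}_\alpha$-witness intact, because the demands of that witness are read off from exactly such towers. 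The iteration step is then an induction in the Shelah template for $\alpha$-proper iterations (\emph{Proper and Improper Forcing}, Ch.\ V and XVIII). One must of course arrange that $\mathrm{TWCG}_\alpha$ holds to begin with, which can be done either by a preliminary $\sigma$-closed forcing that adds a $\mathrm{TWCG}_\alpha$-sequence or by threading such a step into the bookkeeping of the main iteration.

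For the inconsistency of $\mathrm{BPFA}_\alpha$ with $\mathrm{TWCG}_\beta$ when $\alpha<\beta<\omega_1$ are indecomposable, I would argue by contradiction. Fix a putative $\mathrm{TWCG}_\beta$-sequence $\vec C$ in $V$ and construct a forcing $\mathbb{P}_{\vec C}$ of size $\aleph_1$ which is $\alpha$-proper and which adds an object witnessing the failure of $\vec C$ (typically a club shot through an appropriate set, or a coloring avoiding the predicted configurations). The statement ``some object refutes that $\vec C$ is a $\mathrm{TWCG}_\beta$-sequence'' is $\Sigma_1$ in the parameter $\vec C$, so the Bagaria / Stavi--V\"a\"an\"anen reformulation of $\mathrm{BPFA}_\alpha$, i.e.\ $H(\omega_2)^V \prec_{\Sigma_1} H(\omega_2)^{V^{\mathbb{P}_{\vec C}}}$, yields such a witness already in $V$, contradicting the choice of $\vec C$. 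The arithmetical heart of the argument is the gap $\alpha<\beta$: the generic side-conditions of $\mathbb{P}_{\vec C}$ must be controllable along towers of length $\alpha$ (giving $\alpha$-properness) while the damage to $\vec C$ only needs to appear at length $\beta$; indecomposability of the two ordinals ensures that tower arithmetic behaves well at both levels and that the hierarchy of properness classes is actually strictly decreasing between them.

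The main obstacle I foresee is the construction of $\mathbb{P}_{\vec C}$ together with the precise verification that it is $\alpha$-proper but nevertheless destroys $\vec C$ as a $\mathrm{TWCG}_\beta$-witness. Calibrating the genericity requirements so as to hit level $\alpha$ but not level $\beta$ is exactly the point at which the hypothesis that both ordinals are indecomposable is used essentially, and it is also where the announced duality between the axioms $\mathrm{BPFA}_\gamma$ and the principles $\mathrm{TWCG}_\gamma$ is expected to materialize. A secondary, less dramatic obstacle is the preservation lemma on the consistency side, which should follow the pattern of Shelah's preservation theorems for $\alpha$-proper iterations but must be tailored to the specific combinatorial content of $\mathrm{TWCG}_\alpha$; the crucial check is that the ``correct'' level at which guessing is required matches the level at which $\alpha$-properness provides genericity, so that the witness survives each step and each limit of the iteration.
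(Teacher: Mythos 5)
Your two-part plan matches the paper's decomposition exactly (Theorem~\ref{wcg} for the inconsistency direction, Corollary~\ref{consistency} for the consistency direction, both used at the end of Section~\ref{sec:separation}), and on the consistency side the outline is essentially correct: one needs Zapletal's $\sigma$-closed forcing to arrange $\mathrm{TWCG}_\gamma$ in the ground model, Shelah's theorem that countable-support iterations of $\gamma$-proper forcings are $\gamma$-proper, and the single-step preservation fact (Proposition~2.8 of the paper, due to Shelah) that a $\gamma$-proper forcing preserves a thin $\gamma$-weak club guessing witness.

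However, on the inconsistency side there is a genuine gap, and you flag it yourself: you never say what $\mathbb{P}_{\vec C}$ is, and the whole theorem lives in that construction and its $\alpha$-properness proof. The paper's forcing is quite specific: conditions are countable closed $C\subseteq\omega_1$ satisfying $\ot(C\cap A_\gamma)<\beta$ for every $\gamma$ divisible by $\beta$, ordered by end-extension. This constraint on the conditions is exactly what makes the generic club $D$ refute $\bar A$ as a $\mathrm{TWCG}_\beta$-witness, and the statement ``there is a club $D$ with $\ot(A_\gamma\cap D)<\beta$ for all $\gamma$'' is the $\Sigma_1$ formula over $H(\omega_2)$ to which BPFA$_\alpha$ is applied. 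The $\alpha$-properness proof is a delicate tower induction that maintains the order-type invariant: at each step, the new part of the condition meets $A_{\rho_\delta}$ only in at most $\alpha{+}1$ many prescribed ordinals (the $\rho_\varepsilon$), and indecomposability of $\beta$ together with $\alpha<\beta$ guarantees the $<\beta$ bound is not violated. Crucially, \emph{thinness} of $\bar A$ is what ensures each $A_\gamma$ is ``$\mathcal M$-accessible'' (its restrictions $A_\gamma\cap\rho_\delta$ lie in $M_{\delta+1}$), and without this the inductive construction cannot be carried out inside the tower models. Your proposal gestures at the role of the gap $\alpha<\beta$ and of indecomposability, but identifies neither the precise invariant nor where thinness is needed; the verification ``that it is $\alpha$-proper but nevertheless destroys $\vec C$'' that you defer is the entire content of the theorem.
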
 
The weak club guessing principles were introduced already by
Shelah, who considered them as a variant of the full (or
tail) club guessing principles (cf. \cite{ishiu}). Theorem
\ref{separation} actually refines the separation of the
axioms PFA$_\alpha$ due to Shelah \cite[Chapter
XVII]{shelah:proper}, which was done in terms of the full
club guessing principles. We also show the following.
\begin{theorem}\label{separationprinciples}
  For indecomposable ordinals $\alpha<\beta<\omega_1$, the
  principle \textnormal{TWCG}$_\beta$ implies
  \textnormal{TWCG}$_\alpha$ and \textnormal{TWCG}$_\alpha$
  does not imply \textnormal{TWCG}$_\beta$.
\end{theorem}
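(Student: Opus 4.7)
The non-implication in Theorem~\ref{separationprinciples} follows immediately from Theorem~\ref{separation}: its first clause produces a model of $\textnormal{BPFA}_\alpha + \textnormal{TWCG}_\alpha$, and its second clause says that in this model $\neg\textnormal{TWCG}_\beta$ holds. Hence $\textnormal{TWCG}_\alpha$ does not entail $\textnormal{TWCG}_\beta$ in \textnormal{ZFC}.

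For the positive implication $\textnormal{TWCG}_\beta \Rightarrow \textnormal{TWCG}_\alpha$, the plan is to thin a $\beta$-guessing ladder system down to an $\alpha$-guessing one using the Cantor-normal-form arithmetic of indecomposable ordinals. Write $\alpha = \omega^a$, $\beta = \omega^b$ with $a < b$, let $c$ be the unique ordinal satisfying $a + c = b$, and set $\xi := \omega^c$, so that $\alpha \cdot \xi = \omega^{a+c} = \beta$. Given a witness $\langle C_\delta : \delta \in S \rangle$ to $\textnormal{TWCG}_\beta$, with each $C_\delta$ of order type $\beta$, use the canonical order-isomorphism $\beta \cong \alpha \cdot \xi$ to split $C_\delta$ into consecutive blocks $\langle B_\delta^\eta : \eta < \alpha\rangle$ of order type $\xi$, and put
\[
D_\delta := \{\sup B_\delta^\eta : \eta < \alpha\}.
\]
Since $\xi$ is a limit ordinal and the block partition is continuous in $\eta$, each $D_\delta$ is cofinal in $\delta$ and has order type~$\alpha$, yielding a candidate $\alpha$-ladder system.

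It remains to verify that $\langle D_\delta \rangle$ inherits the weak tail guessing property of $\langle C_\delta \rangle$. The guiding intuition is that, if a club $E$ is weakly guessed by $C_\delta$, then since $E$ is closed, $E$ contains every supremum $\sup B_\delta^\eta$ for which $C_\delta \cap E \cap B_\delta^\eta$ is cofinal in $\sup B_\delta^\eta$; the weak guessing hypothesis should furnish cofinally many such $\eta < \alpha$, producing a tail of $D_\delta$ inside $E$.

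The main obstacle will be exactly this preservation step. Depending on the precise formulation of the weak/tail guessing property adopted in Section~\ref{sec:separation}, cofinality of $C_\delta \cap E$ in $\delta$ does not mechanically force cofinality of $C_\delta \cap E$ inside sufficiently many blocks $B_\delta^\eta$. Handling this may require a more careful choice of block boundaries (for example choosing them along an auxiliary club so that sufficiently many $\sup B_\delta^\eta$ are already limit points of $C_\delta$ inside each block), or replacing the sup-thinning by a different cofinal subset of $C_\delta$ of order type $\alpha$ defined independently of $E$ but using the closure structure of $C_\delta$; but in every case the ordinal factorisation $\beta = \alpha \cdot \xi$ --- which uses only that $\alpha < \beta$ are both indecomposable --- remains the structural heart of the argument.
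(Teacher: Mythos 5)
Your non-implication clause matches the paper's primary derivation (the paper does note an alternative that avoids the large cardinal, namely forcing over a model of $\textnormal{TWCG}_\alpha + \mathrm{CH} + 2^{\aleph_1}=\aleph_2$ with an $\omega_2$-length countable-support iteration of $\alpha$-proper posets killing all thin $\beta$-guessing systems; strictly speaking the Theorem~\ref{separation} route only gives the non-implication relative to a reflecting cardinal, but that is the route the paper also takes first).

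The positive implication is where the gap lies, and it is not just the preservation issue you flag; the block decomposition itself is wrong. With $\alpha=\omega^a$, $\beta=\omega^b$, $a+c=b$ and $\xi=\omega^c$, the identity $\beta=\alpha\cdot\xi$ realises $\beta$ as $\xi$ consecutive copies of $\alpha$, i.e.\ $\xi$-many blocks of order type $\alpha$, not $\alpha$-many blocks of order type $\xi$; so your $D_\delta=\{\sup B^\eta_\delta:\eta\}$ has order type $\xi$, not $\alpha$. The factorisation you actually want, $\beta=\xi\cdot\alpha$, would need $c$ with $c+a=b$, which need not exist (take $\alpha=\omega$, $\beta=\omega^\omega$: there is no ordinal $c$ with $c+1=\omega$). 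So no uniform splitting of a type-$\beta$ ladder into $\alpha$-many congruent blocks is available in general, and any direct ``thin $C_\delta$ to a single $D_\delta$ of type $\alpha$'' scheme is dead on arrival before the preservation question even arises.

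The paper instead proves the contrapositive and restructures the whole argument. Assuming $\textnormal{TWCG}_\alpha$ fails, it shows by induction on indecomposable $\alpha'\in[\alpha,\beta]$ that $\textnormal{TWCG}_{\alpha'}$ fails; crucially the inductive statement is strengthened to thin $(\alpha',\aleph_0)$-systems, which is where Lemma~\ref{omegalemma} (identifying $\textnormal{TWCG}_\gamma$ with $\textnormal{TWCG}^{\aleph_0}_\gamma$) enters. At the inductive step each ladder of type $\alpha'$ is split into $\omega$-many consecutive pieces, each of indecomposable order type in $[\alpha,\alpha')$; this is always possible, precisely because $\alpha'$ is indecomposable, and avoids the arithmetic obstruction above. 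The inductive hypothesis hands you, for each intermediate indecomposable $\pi\in[\alpha,\alpha')$, a club defeating all pieces of type $\pi$; intersecting these countably many clubs and throwing in the $\omega$-sequence of block endpoints produces a derived system of order type $\le\alpha$ (finite partial sums of ordinals $<\alpha$ stay $<\alpha$ by indecomposability), whose thinness must then be verified directly. Your sketch contains none of these mechanisms --- the contrapositive framing, the $(\alpha,\aleph_0)$-system strengthening, the $\omega$-block decomposition, the descent through intermediate indecomposable ordinals, or the thinness check --- so while the intuition that indecomposability of both endpoints is the engine is sound, the proposed route does not lead to a proof.
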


The role of the forcings embeddable into \sigmaclosedccc\
was already recognized by Baumgartner, who actually
conjectured that every forcing satisfying Axiom A can be
embedded into an iteration of a $\sigma$-closed followed by
a ccc forcing. This would of course mean that the two
classes are in fact the same, up to forcing-equivalence.
Probably, the first motivation came with the Mathias forcing
and its decomposition into $P(\omega)\slash\mathrm{fin}$
followed by the Mathias forcing with an ultrafilter. Later,
the conjecture was confirmed for the Sacks forcing and other
tree-like forcing notions in \cite{groszek:jech}. Miyamoto
\cite{miyamoto} proved it for the iterations of a ccc
followed by a $\sigma$-closed forcing. Recently, Zapletal
proved that in most cases if an idealized forcing is proper,
then it is in fact embeddable into \sigmaclosedccc\
\cite[Theorems 4.1.5, 4.2.4, 4.3.26, 4.5.9, Lemma
4.7.7]{zapletal:book}.

We introduce the notion of a strong Axiom A forcing (for a
precise definition see Section \ref{sec:embeddable}), which
is basically saying that a forcing satisfies Axiom A after
taking a product with any $\sigma$-closed forcing. We prove
the following characterization.

\begin{theorem}\label{strongaa}
  Let $P$ be a forcing notion. The following are equivalent
  \begin{itemize}
  \item[(i)] $P$ satisfies strong Axiom A,
  \item[(ii)] $P$ is embeddable into \sigmaclosedccc.
  \end{itemize}
\end{theorem}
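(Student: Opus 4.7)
For the direction \emph{(ii) $\Rightarrow$ (i)}, suppose $P$ completely embeds into $Q\ast\dot R$ with $Q$ $\sigma$-closed and $\dot R$ a $Q$-name for a ccc forcing. For any $\sigma$-closed $S$ one has $(Q\ast\dot R)\times S\cong (Q\times S)\ast\dot R$, and $Q\times S$ is still $\sigma$-closed, so $P\times S$ embeds completely into a $\sigma$-closed\,$\ast$\,ccc forcing. It therefore suffices to check that every such iteration satisfies Axiom A, and that Axiom A is inherited by complete subforcings. The first point follows from Ishiu's theorem, since these iterations are $<\!\omega_1$-proper: both factors are $<\!\omega_1$-proper, and the property is preserved under two-step iteration. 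The second point is a routine verification: the $\leq_n$-sequence on $Q\ast\dot R$ restricts to one on $P$, and both the fusion and the antichain-reduction axioms survive the restriction.

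For the substantive direction \emph{(i) $\Rightarrow$ (ii)}, let $Q\subseteq\ro(P)$ be the maximal $\sigma$-closed complete subalgebra of the regular open algebra of $P$; such a $Q$ exists because the union of an increasing chain of $\sigma$-closed complete subalgebras is again $\sigma$-closed and complete. Then $P$ embeds completely into $Q\ast(\ro(P)/\dot Q)$, and the task is to show that $\ro(P)/\dot Q$ is forced to be ccc in $V[Q]$.

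Suppose toward a contradiction that some $q_0\in Q$ forces the existence of an uncountable antichain $\dot A=\{\dot a_\alpha:\alpha<\omega_1\}$ in $\ro(P)/\dot Q$. Apply strong Axiom A to the product forcing $P\times Q$: there is a sequence $\leq_n$ on $P\times Q$ witnessing Axiom A. Build, by diagonal fusion, a $\leq_n$-descending sequence $(p_n,q_n)$ below $(1,q_0)$ repeatedly applying the antichain-reduction axiom so that, at stage $n$, only countably many $\dot a_\alpha$ can still enter the generic quotient below $(p_n,q_n)$. Let $(p_\infty,q_\infty)$ be a lower bound. The reduction data attached to $q_\infty$ encodes, inside $\ro(P)$, Boolean values outside $Q$ whose closure under countable meets and joins is again a $\sigma$-closed complete subalgebra of $\ro(P)$, strictly extending $Q$, which contradicts the maximality of $Q$.

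The \emph{main obstacle} is this final step: converting the fusion output into a genuine $\sigma$-closed complete subalgebra of $\ro(P)$ properly extending $Q$. Strong Axiom A is used in full here, because the descent must really take place after passing to the ground extension by $Q$; only the persistence of Axiom A in $V[Q]$ lets one thin the would-be antichain, and only by working in $V$ via the product $P\times Q$ can the thinned data be recognised as extending the $\sigma$-closed part of $\ro(P)$. The bookkeeping needed to extract completeness-under-countable-operations of the new subalgebra, while keeping track of the interaction between the $P$ and $Q$ coordinates of the fusion, is the delicate point that drives the proof.
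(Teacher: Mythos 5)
Your plan for (ii) $\Rightarrow$ (i) breaks down at two points. First, ``Axiom A is inherited by complete subforcings'' is not true in general: if $P$ is a complete subalgebra of $\ro(Q*\dot R)$, the elements of $P$ are not elements of $Q*\dot R$, so the ordering $\leq_n$ on $Q*\dot R$ does not ``restrict to one on $P$.'' More fundamentally, Axiom A is a combinatorial property of a particular poset presentation and is not invariant under complete embedding; this is precisely why the paper must manufacture an explicit dense quasi-order $P'$ forcing-equivalent to $P$, whose elements are pairs $(p,(r,\dot q))\in P\times(R*\dot Q)$ with $p\wedge(r,\dot q)\neq 0$, and transport the ordering $\leq_R\times\dot=$ through the second coordinate. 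Second, the definition of strong Axiom A asks for a single quasi-order $P'$ and a single $\leq_0$ witnessing uniform Axiom A of $S\times P'$ for \emph{every} $\sigma$-closed $S$; showing, separately for each $S$, that $P\times S$ sits inside some $\sigma$-closed$*$ccc forcing and is (say) $<\!\omega_1$-proper does not deliver this uniform witness.

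Your plan for (i) $\Rightarrow$ (ii) also has a serious gap at the outset. It is not true that the union of an increasing chain of $\sigma$-closed complete subalgebras of $\ro(P)$ is again a complete subalgebra, so the Zorn-type argument producing ``the maximal $\sigma$-closed complete subalgebra $Q$'' does not go through; and even granting some maximal $Q$, there is no reason a priori for $\ro(P)/\dot Q$ to be ccc, nor does the final paragraph explain how the ``reduction data attached to $q_\infty$'' is to be closed off into a \emph{complete} $\sigma$-closed subalgebra strictly containing $Q$ (closing under countable meets and joins does not yield completeness). The paper's proof instead builds the $\sigma$-closed part externally: $R$ is the poset of countable subsets $\pi\subseteq P$, ordered by requiring each element of the smaller $\pi$ to $\leq_0$-refine one of the larger and each larger element to remain predense below each smaller, and $\dot Q$ is the union of the generic filter. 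Strong Axiom A is then used, via an antichain argument in $R\times P$ (which is a product of a $\sigma$-closed poset with $P$, so uniform Axiom A applies), to show that $\dot Q$ is forced to be ccc, and genericity of the $\dot Q$-filter for $P$ is checked directly. The structure of your proposed $Q$ is nowhere in the argument.
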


Theorem \ref{strongaa} is in fact a confirmation of
Baumgartner's conjecture as it says that there indeed is a
close connection between Axiom A and embeddability into
$\sigma$-closed and ccc. This characterization cannot,
however, be strengthened to the one suggested by Baumgartner
because Theorem \ref{strongaa} leads also to the following
counterexample.

\begin{corollary}\label{original}
  There is an Axiom A forcing notion which is not embeddable
  into \sigmaclosedccc. It is of the form
  ccc\,$*$\,$\sigma$-closed\,$*$\,ccc.
\end{corollary}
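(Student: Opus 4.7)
The plan is to invoke Theorem~\ref{strongaa}, which identifies embeddability into \sigmaclosedccc\ with strong Axiom A (Axiom A being preserved under product with every $\sigma$-closed forcing). Thus it suffices to construct an iteration $P = P_1 * \dot P_2 * \dot P_3$, with $P_1$ ccc, $\dot P_2$ a name for a $\sigma$-closed forcing, and $\dot P_3$ a name for a ccc forcing, together with a $\sigma$-closed $Q \in V$ such that $P \times Q$ fails Axiom A.

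That $P$ itself satisfies Axiom A is routine: ccc and $\sigma$-closed forcings are trivially Axiom A, and by Koszmider's theorem \cite{koszmider} Axiom A is preserved under countable-support iteration, so the three-step iteration is Axiom A.

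The heart of the argument is the counterexample. The guiding intuition is that Miyamoto's embedding of ccc$\,*\,$$\sigma$-closed into $\sigma$-closed$\,*\,$ccc is not an isomorphism of Boolean completions: the target model is, in general, strictly larger than the source, and a ccc forcing $\dot P_3 \in V^{P_1*\dot P_2}$ can acquire uncountable antichains when reinterpreted in the larger $\sigma$-closed$\,*\,$ccc extension. This is precisely what obstructs collapsing ccc$\,*\,$$\sigma$-closed$\,*\,$ccc to $\sigma$-closed$\,*\,$ccc. The plan is therefore to choose $\dot P_3$ so that such a failure occurs, and to choose $Q$ as a $\sigma$-closed forcing in $V$ whose generic furnishes exactly the extra information separating the two intermediate models; then the product $P \times Q$ contains a condition below which $\dot P_3$ has an uncountable antichain but no countably fused maximal refinement, breaking the fusion clause of Axiom A.

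The main obstacle is making these heuristics explicit. A reasonable starting candidate is to take $P_1$ to be Cohen forcing (whose generic encodes an enumeration), $\dot P_2$ a $\sigma$-closed collapse driven by that encoding, $\dot P_3$ a ccc forcing whose chain condition depends essentially on $P_1$ being done before $\dot P_2$, and $Q$ a $\sigma$-closed forcing in $V$ that undermines this temporal dependence when run in parallel. Pinning down concrete choices and verifying that $P \times Q$ genuinely violates Axiom A is the decisive technical step, and the one I would expect to occupy most of the effort; once it is done, Theorem~\ref{strongaa} immediately delivers non-embeddability into \sigmaclosedccc.
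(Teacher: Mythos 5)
Your overall strategy is right---reduce via Theorem~\ref{strongaa} to finding a forcing $P$ of the required form and a $\sigma$-closed $Q$ such that $P\times Q$ cannot be Axiom A---but the proposal stops exactly where the content of the corollary begins. You explicitly defer ``pinning down concrete choices and verifying that $P\times Q$ genuinely violates Axiom A'' as the decisive step, and you only gesture at Cohen/collapse heuristics without producing an example. The paper discharges this step entirely by citation: Shelah (\cite[Chapter XVII, Observation 2.12]{shelah:proper}) already constructed a $\sigma$-closed $Q$ and a forcing $P$ of the form ccc\,$*$\,$\sigma$-closed\,$*$\,ccc whose \emph{product collapses} $\omega_1$. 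That is the whole counterexample, and it is the piece your proposal is missing.

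There is also a subtler gap in how you propose to derive failure of strong Axiom A. Strong Axiom A quantifies existentially over forcing-equivalent quasi-orders $P'$ and orderings $\leq_0$: to refute it you must show that for \emph{every} such $P'$ and $\leq_0$ there is some $\sigma$-closed $S$ for which $S\times P'$ fails uniform Axiom A via $\leq_S\times\leq_0$. Your plan---exhibit an uncountable antichain in $\dot P_3$ with no ``countably fused maximal refinement''---attacks one particular fusion ordering and does not by itself rule out a cleverer choice of $P'$ or $\leq_0$. The reason the $\omega_1$-collapse works is that it is a forcing-invariant obstruction: any Axiom A forcing preserves $\omega_1$, and $P'\times Q$ is forcing-equivalent to $P\times Q$ for any forcing-equivalent $P'$, so no choice of $P'$ and $\leq_0$ can rescue the product. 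You should make this invariance explicit rather than argue against a single fusion ordering. Finally, for $P$ itself being Axiom A, the paper does not invoke Koszmider directly but observes that $P$ is $<\!\!\omega_1$-proper (being a countable-support iteration of $<\!\!\omega_1$-proper forcings) and then applies Ishiu's theorem that $<\!\!\omega_1$-properness and Axiom A coincide up to forcing equivalence; your route through Koszmider's preservation theorem is also viable, though it requires checking that each iterand is forced to be Axiom A in the appropriate sense.
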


This paper is organized as follows. Section
\ref{sec:separation} contains the results on the weak club
guessing principles and the bounded forcing axioms for
$\alpha$-proper forcings. Section \ref{sec:embeddable}
contains the characterization of forcings embeddable into
\sigmaclosedccc.

\subsection{Remark}

\label{sec:ack}
After this work has been done, we have learnt that
Todor\v{c}evi\'c can also derive Corollary \ref{original}
from the results of his \cite[Section
2]{todorcevic:relations}; this proof has, however, never
been published.

\section{Bounded forcing axioms and weak club guessing}
\label{sec:separation}

\begin{definition}
  Let $\kappa > \omega$ be a regular cardinal, $\alpha$ an
  ordinal and $\mathcal{M}=\{M_{\varepsilon} :\varepsilon
  \in \alpha \}$ be a sequence of countable elementary
  substructures of $H(\kappa) $. We say that $\mathcal{M}$
  is an \emph{internally approachable tower} if the
  following hold:
  \begin{itemize}
  \item[(i)] $\{M_{\varepsilon} :\varepsilon \leq \eta \}
    \in M_{\eta+1}$ for every $\eta \in \alpha$ with $\eta+1
    \in \alpha$,
  \item[(ii)] $M_{\eta} = \bigcup \{M_{\varepsilon}
    :\varepsilon < \eta \}$ for every limit ordinal $\eta
    \in \alpha$.
  \end{itemize}
\end{definition}

As usual, $H(\kappa)$ is the collection of all sets of
hereditary cardinality less than $\kappa$. We will identify
$H(\kappa)$ with the structure $\langle H(\kappa), \in, \lhd
\rangle$, where $\lhd$ is a fixed well order of $H(\kappa)$.

\begin{definition}
  Let $P$ be a partial order and $\alpha$ a countable
  ordinal.
\begin{itemize}
\item[(a)] Given $q \in P$ and
  $\mathcal{M}=\{M_{\varepsilon} :\varepsilon \in \alpha \}$
  an internally approachable tower of countable elementary
  substructures of $H(\kappa)$ with $P \in M_{0}$, we say
  that $q$ is \textit{generic over $\mathcal{M}$} if $q$
  forces that $\dot{G} \cap M_{\varepsilon}$ is generic over
  $M_{\varepsilon}$ for every $\varepsilon \in \alpha$.
\item[(b)] $P$ is $\alpha$-\emph{proper} if for every
  sufficiently large regular cardinal $\kappa$, for every
  internally approachable tower
  $\mathcal{M}=\{M_{\varepsilon} :\varepsilon \in \alpha \}$
  as above and for every condition $p \in P \cap M_{0}$,
  there exists $q \leq p$ such that $q$ is $(\mathcal{M},
  P)$-generic. $P$ is $<\!\!\omega_1$-proper if it is
  $\alpha$-proper for each $\alpha<\omega_1$.
\end{itemize}
\end{definition}

Note that if $P$ is proper (i.e., 1-proper), then $P$ is
$n$-proper for every natural number $n$. Recall that a
countable ordinal $\beta$ is said to be
\emph{indecomposable} if there exists a nonzero ordinal
$\tau$ such that $\beta = \omega^{\tau}$ (this is ordinal
exponentiation). Equivalently, $\beta$ is indecomposable if
for every $\gamma < \beta$, the order type of the interval
$(\gamma, \beta)$ is equal to $\beta$.  Now, if $P$ is
$\alpha$-proper and $\beta$ is the first indecomposable
ordinal above $\alpha$, then $P$ is $\gamma$-proper for every
$\gamma < \beta$.

Let $\alpha$ be an indecomposable ordinal. We denote by
PFA$_{\alpha}$ the forcing axiom for the class of
$\alpha$-proper forcing notions.  By BPFA$_{\alpha}$ we
denote the bounded forcing axiom for this class.

\begin{definition}
  An $\alpha$-\emph{ladder system} is a sequence $\bar{A}=
  \langle A_\beta: \beta<\omega_1\rangle$ such that for each
  $\beta<\omega_1$, with $\alpha$ dividing $\beta$, the set
  $A_\beta$ is a closed unbounded subset of $\beta$ and
  $\ot(A_\beta) = \alpha$. We will always assume that
  $\langle A_\beta(\tau): \tau< \alpha\rangle$ is the
  increasing enumeration of the elements of $A_\beta$. We
  say that an $\alpha$-ladder system $\langle A_\beta:
  \beta<\omega_1\rangle$ is \textit{thin} if for any
  $\beta<\omega_1$ the set
  $\{A_\gamma\cap\beta:\gamma\in\omega_1\}$ is countable.
\end{definition}
\begin{definition}    
  The \textit{$\alpha$-Weak Club Guessing} principle,
  denoted by WCG$_\alpha$ says that there is an
  $\alpha$-ladder system $\bar{A}$ such that for every club
  $D\subseteq\omega_1$ there is $\beta\in D$ such that
  $\alpha$ divides $\beta$ and $\ot(A_\beta \cap D) =
  \alpha$. The \textit{$\alpha$-Thin Weak Club Guessing}
  principle, denoted by TWCG$_\alpha$, also asserts the
  existence of such an $\bar{A}$ but with the additional
  requirement of being thin.
\end{definition}

Thin (full) club guessing ladder systems have been
considered in the literature in \cite{zapletal,ishiu}.
Zapletal mentions \cite[Section 1.A]{zapletal} that their
existence can be derived from $\Diamond$ and shows
\cite[Section 2]{zapletal} how to force one with a
$\sigma$-closed forcing notion.

\begin{theorem}\label{wcg}
  For indecomposable ordinals $\alpha<\beta<\omega_1$,
  \textnormal{BPFA}$_\alpha$ implies the negation of
  \textnormal{TWCG}$_\beta$.
\end{theorem}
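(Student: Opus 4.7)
My plan is to shoot a club of $\omega_1$ that destroys the guessing property of the given witness to TWCG$_\beta$, and then reflect this club down to $V$ using BPFA$_\alpha$. Suppose $\bar{A} = \langle A_\gamma : \gamma < \omega_1 \rangle$ is a thin $\beta$-ladder system. I would consider the forcing $P = P_{\bar{A}}$ whose conditions are closed bounded subsets $d \subseteq \omega_1$ satisfying $\ot(A_\gamma \cap d) < \beta$ for every $\gamma < \omega_1$ with $\beta \mid \gamma$, ordered by end-extension. Routine density arguments show that the generic $D = \bigcup G$ is a club in $\omega_1$ and, since the constraint passes through end-extension unions, $\ot(A_\gamma \cap D) < \beta$ for every $\gamma < \omega_1$ with $\beta \mid \gamma$.

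The reflection step is then immediate: the assertion ``there exists a club $D \subseteq \omega_1$ such that $\ot(A_\gamma \cap D) < \beta$ for every $\gamma$ with $\beta \mid \gamma$'' is $\Sigma_1$ over $H(\omega_2)$ in the parameter $\bar{A}$. Hence, granting that $P$ is $\alpha$-proper, BPFA$_\alpha$ in the Bagaria--Stavi--V\"a\"an\"anen form produces such a $D$ already in $V$. But then TWCG$_\beta$, applied to $D$, yields some $\gamma \in D$ with $\beta \mid \gamma$ and $\ot(A_\gamma \cap D) = \beta$, contradicting the choice of $D$.

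All the work is therefore in showing that $P$ is $\alpha$-proper. Given an internally approachable tower $\mathcal{M} = \langle M_\varepsilon : \varepsilon < \alpha \rangle$ with $P, \bar{A} \in M_0$ and $p \in P \cap M_0$, set $\delta_\varepsilon = M_\varepsilon \cap \omega_1$ and $\delta = \sup_{\varepsilon < \alpha}\delta_\varepsilon$. I would recursively construct a decreasing sequence $\langle q_\varepsilon : \varepsilon \leq \alpha \rangle$ of conditions with $q_\varepsilon \in M_{\varepsilon+1}$, $\max q_\varepsilon = \delta_\varepsilon$, and such that $q_\varepsilon$ meets an enumeration (in $M_{\varepsilon+1}$) of the dense sets lying in $M_\varepsilon$; at limit stages $\eta$ I would take $q_\eta = \bigcup_{\varepsilon < \eta} q_\varepsilon \cup \{\delta_\eta\}$. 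For any $\gamma < \delta_\eta$ with $\beta \mid \gamma$, closure under the constraint is automatic since $A_\gamma \subseteq \gamma < \delta_{\varepsilon_0}$ for some $\varepsilon_0$ forces $A_\gamma \cap q_\eta = A_\gamma \cap q_{\varepsilon_0}$, which has ordertype $< \beta$.

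The crux, and the principal obstacle, is the case $\gamma = \delta_\eta$ when $\beta \mid \delta_\eta$: there $A_{\delta_\eta}$ is cofinal in $\delta_\eta$ of ordertype exactly $\beta$ while $q_\eta$ is cofinal in $\delta_\eta$, so one must still force $\ot(A_{\delta_\eta} \cap q_\eta) < \beta$. This is precisely where thinness of $\bar{A}$ enters: by thinness and elementarity, for each $\varepsilon < \eta$ the initial segment $A_{\delta_\eta} \cap \delta_\varepsilon$ equals $A_{\gamma_\varepsilon} \cap \delta_\varepsilon$ for some $\gamma_\varepsilon \in M_{\varepsilon+1}$ with $\beta \mid \gamma_\varepsilon$ (and therefore $\gamma_\varepsilon < \delta_\eta$). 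Since $q_\varepsilon \in M_{\varepsilon+1}$ is a condition, $\ot(A_{\gamma_\varepsilon} \cap q_\varepsilon) < \beta$, giving a per-stage bound on $\ot(A_{\delta_\eta} \cap q_\varepsilon)$. The remaining technical point is to choose each $q_{\varepsilon+1}$ carefully, using the countable menu of possible tails predicted by thinness and the freedom in $M_{\varepsilon+1}$, so that these bounds do not sup up to $\beta$ along $\varepsilon \to \eta$; here the hypothesis $\alpha < \beta$ together with the indecomposability of $\beta$ is used to keep the $\alpha$-sequence of ordertypes cofinally bounded below $\beta$. Making this accumulation argument precise is the main technical step of the proof.
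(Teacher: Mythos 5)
Your overall strategy matches the paper's: you define the same forcing $P$ of bounded closed sets with the constraint $\ot(C\cap A_\gamma)<\beta$, invoke the Bagaria--Stavi--V\"a\"an\"anen form of BPFA$_\alpha$ to reflect the generic club to $V$, and correctly reduce everything to showing that $P$ is $\alpha$-proper. You also correctly locate the crux: at a limit stage $\eta$ of the tower with $\beta\mid\delta_\eta$, the naive union $\bigcup_{\varepsilon<\eta}q_\varepsilon$ may pick up too much of $A_{\delta_\eta}$, and thinness of $\bar A$ must come to the rescue. Up to this point you are reproducing the paper's plan.

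The genuine gap is in the $\alpha$-properness proof, and you concede it yourself (``making this accumulation argument precise is the main technical step''). The mechanism you gesture at --- at each successor stage, use the countable menu of candidate tails to bound $\ot(q_\varepsilon\cap A_{\delta_\eta})$ so the order types stay cofinally below $\beta$ --- does not, on its own, give what is needed. First, avoiding \emph{all} candidate tails in $(\delta_\varepsilon,\delta_{\varepsilon+1})$ is not always possible: the union of countably many of these tails can be club in $\delta_{\varepsilon+1}$, and a condition is a closed set containing $\delta_{\varepsilon+1}$, so you cannot simply steer clear of all of them. Second, merely keeping the per-stage order types $\ot(q_\varepsilon\cap A_{\delta_\eta})$ bounded away from $\beta$ requires knowing, at stage $\varepsilon$, which $\eta$ one is heading toward; but $\eta$ is only resolved at the limit stage. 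What the paper does is qualitatively different: it runs a recursion $p(\gamma,X)$ on a second parameter $X$ (a set to be avoided, required to be ``$\mathcal{M}$-accessible'' so that thinness keeps it visible to each $M_{\gamma+1}$), and at a limit stage $\gamma$ it does \emph{not} take the union of the previously built conditions but instead \emph{rebuilds} the approach to $\rho_\gamma$ along a fixed ladder on $\gamma$ with the enriched parameter $X\cup A_{\rho_\gamma}$. This retroactive re-run is what lets the construction anticipate $A_{\rho_\gamma}$ exactly when it becomes relevant. The resulting invariant is also stronger than an order-type bound: clause (iii) forces $p(\delta+1,X)\cap(A_{\rho_\delta}\cup X)$ to lie inside the small explicit ``scaffold'' $p\cup\{\rho_\varepsilon:\varepsilon\le\delta\}$, of order type at most $\ot(p\cap A_\gamma)+\alpha+1$, which is $<\beta$ by indecomposability of $\beta$ and $\alpha<\beta$. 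So while your identification of the role of $\alpha<\beta$, indecomposability, and thinness is correct, the key combinatorial device --- the nested recursion on the ``avoid'' parameter, together with the ladder-indexed rebuild at limits --- is missing from your sketch, and a straightforward one-pass fusion of the kind you describe would not close the gap.
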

\begin{proof}
  By the $\Sigma_1(H(\omega_2))$ generic absoluteness
  characterization of BPFA$_\alpha$, it suffices to prove
  that for any thin $\beta$-club guessing sequence $\bar A$
  there is an $\alpha$-proper forcing notion shooting a club
  in $\omega_1$ which is not guessed by $\bar A$.

  Fix a thin $\beta$-club guessing sequence $\bar A=\langle
  A_\gamma:\gamma<\omega_1\rangle$. Let $P$ be the following
  forcing notion. Conditions in $P$ are countable subsets
  $C$ of $\omega_1$ such that
  \begin{itemize}
  \item $C$ is closed in the order topology,
  \item $\ot(C\cap A_\gamma) < \beta$ for each
    $\gamma<\omega_1$ with $\beta$ dividing $\gamma$.
  \end{itemize}

  The ordering $\leq_P$ on $P$ is the end-extension. We need
  to show that $P$ is $\alpha$-proper. Let $\kappa$ be a
  sufficiently large regular cardinal and let $\lhd$ be a
  well-ordering on $H(\kappa)$.  Pick an internally
  approachable tower $\mathcal{M}=\langle
  M_\gamma:\gamma<\alpha\rangle$ of countable elementary
  submodels of $\langle H(\kappa),\in,\lhd \rangle$ such
  that $\bar A \in M_0$.  Put
  $\rho_\gamma=M_\gamma\cap\omega_1$.  Let $p\in M_0$ be any
  condition in $P$. We need to find a condition extending
  $p$ and generic for the whole tower. For so doing,
  consider the $\lhd$-least $\omega$-ladder system $\bar B$
  and note that $\bar B \in M_0$. 

  Say that $X \subseteq \omega_1$ is
  \textit{$\mathcal{M}$-accessible} if the order type of $X$
  is strictly less than $\rho_0$ and $X \cap \rho_\gamma \in
  M_{\gamma+1}$ for every $\gamma<\alpha$. Note that each
  $A_\gamma$ is $\mathcal{M}$-accessible by thinness. For
  each $\mathcal{M}$-accessible $X\subseteq\omega_1$ we
  construct by induction a decreasing sequence of conditions
  $p(\gamma, X)$ for $\gamma\leq\alpha$ such that for each
  $\gamma \leq \alpha$ we have
  \begin{itemize}
  \item[(i)] $p(0, X)=p$,
  \item[(ii)] $p(\gamma, X)$ is a $P$-generic condition for
    $\langle M_\delta:\delta<\gamma\rangle$,
  \item[(iii)] if $\gamma=\delta+1$, then $p(\gamma, X) \cap
    (A_{\rho_\delta} \cup X) \subseteq p \cup
    \{\rho_\varepsilon : \varepsilon \leq \delta \}.$
  \item[(iv)] $p(\gamma, X)\in M_\gamma$ for successor
    $\gamma$ and $p(\gamma, X) \in M_{\gamma+1}$ for limit
    $\gamma$ 
  \end{itemize}
  Here $M_{\alpha+1}=H(\kappa)$.  In order to guarantee that
  (iv) holds, we will also require the following conditions
  \begin{itemize}    
  \item[(v)] $p(\gamma,X)=\bigcup_{n
      <\omega}p(B_{\gamma}(n), X\cup A_{\rho_\gamma})$ for
    limit $\gamma$,
  \item[(vi)] $p(\gamma+1, X)=p(\gamma, X)$ for limit $\gamma$,
  \item[(vii)] if $\gamma < \alpha$ is zero or successor,
    then $p(\gamma+1, X)$ is the $\lhd$-least condition
    which extends $p(\gamma,X)$ and satisfies (i), (ii) and
    (iii).
  \end{itemize}

  Put $p(0,X)=p$. Suppose $\gamma\leq\alpha$ and
  $p(\delta,X)$ have been constructed for all
  $\delta<\gamma$. If $\gamma$ is limit, then $p(\gamma, X)$
  is defined as in (v). If $\gamma=\delta+1$ and $\delta$ is
  a limit, then $p(\gamma, X)=p(\delta, X)$. Suppose
  $\gamma=\delta+1$ and $\delta$ is zero or a successor, in
  which case $p(\delta, X)\in M_\delta$. We need to show
  that there exists a condition extending $p(\delta, X)$ and
  satisfying (ii) and (iii).

  Enumerate all dense open subsets of $P$ in $M_\delta$ into
  a sequence $\langle D_n:n<\omega\rangle$ (assume $D_0=P$)
  and inductively construct a decreasing sequence of
  conditions $p^n\in M_\delta\cap D_n$ such that
  $p^0=p(\delta, X)$ and $p^n\cap (A_{\rho_\delta}\cup X)
  =p(\delta,X)$. Suppose $p^n\in M_\delta$ has been
  constructed and let $\eta^n=\sup(p_n)$. Consider the
  function $f:\omega_1 \setminus\eta^n\rightarrow \omega_1$
  defined as follows: for $\nu\in\omega_1 \setminus\eta^n$
  let $q^\nu$ be the $\lhd$-smallest condition which extends
  $p^n\cup\{\nu\}$ and belongs to $D_{n+1}$. Then we define
  $f(\nu)$ as the maximum of $q^\nu$.  Now let
  $E\subseteq\omega_1$ be the club of those points greater
  than $\eta^n$ which are closed under $f$. Note that $f$
  and $E$ are in $M_\delta$, since they are definable from
  parameters in this model.  It follows that
  $$\ot(E \cap \rho_\delta)=\rho_\delta > \ot(A_{\rho_\delta}
  \cup (X \cap \rho_\delta)) $$

  Choose two elements $\nu_0<\nu_1< \rho_\delta$ of $E$ such
  that $[\nu_0,\nu_1]\cap(A_{\rho_\delta}\cup (X \cap
  \rho_\delta)) = \emptyset$. We can choose $p^{n+1}$ to be
  $q^{\nu_0}$.

  Now the condition
  $\bigcup_{n<\omega}p^n\cup\{\rho_\delta\}$ is $P$-generic
  for $M_\delta$ and for the whole subtower $\langle
  M_\varepsilon:\varepsilon<\gamma\rangle$ and satisfies
  (ii) and (iii). Let $p(\gamma, X)$ be the $\lhd$-smallest
  condition with these properties and note that $p(\gamma,
  X) \in M_\gamma$, since this condition is definable (using
  the order $\lhd$ of $H(\kappa)$) from $p$, $X \cap
  \rho_\delta$ and $\langle
  M_\varepsilon:\varepsilon<\gamma\rangle$. This ends the
  successor step of the inductive construction. It is
  immediate that the condition $p(\alpha, \emptyset)$ is
  generic for the whole tower $\langle
  M_\gamma:\gamma<\alpha\rangle$.
\end{proof}

The following proposition (due to Shelah) appears in
\cite[Proposition 3.5]{ishiu} for full club guessing ladder
systems. The proof for weak club guessing is exactly the
same. We provide it for the reader's convenience.

\begin{proposition}
  Let $\bar A=\langle A_\gamma:\gamma<\omega_1\rangle$ be a
  thin $\beta$-ladder system and $P$ a $\beta$-proper notion
  of forcing. If $\bar A$ witness \textnormal{TWCG}$_\beta$,
  then $\bar A$ witnesses \textnormal{TWCG}$_\beta$ in any
  generic extension with $P$.
\end{proposition}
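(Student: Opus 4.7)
The plan is to show that for any $P$-name $\dot D$ for a club in $\omega_1$ and any $p\in P$, some $q\leq p$ forces $\ot(A_\gamma\cap\dot D)=\beta$ for a suitable $\gamma$ with $\beta\mid\gamma$. The ordinal $\gamma$ will be selected in $V$ using \textnormal{TWCG}$_\beta$ applied to a club built from an internally approachable chain, and $\beta$-properness of $P$ will then push a cofinal subset of $A_\gamma$ into $\dot D$.

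Fix $p$ and $\dot D$. I would first build an internally approachable continuous chain $\langle M_i:i<\omega_1\rangle$ of countable elementary submodels of some $H(\kappa)$ containing $p,P,\dot D,\bar A$, arranged so that $\rho_i:=M_i\cap\omega_1$ is strictly increasing on limit $i$. Then $C=\{\rho_i:i<\omega_1\text{ is limit}\}$ is a club in $V$, so \textnormal{TWCG}$_\beta$ provides $\gamma$ with $\beta\mid\gamma$ and $\ot(A_\gamma\cap C)=\beta$. Enumerate $A_\gamma\cap C=\{\rho_{i_\xi}:\xi<\beta\}$ in increasing order, so that each $i_\xi$ is a limit ordinal, and set $N_\xi:=M_{i_\xi}$.

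The main obstacle is verifying that $\langle N_\xi:\xi<\beta\rangle$ is an internally approachable tower, so that $\beta$-properness of $P$ applies. Continuity at a limit $\eta<\beta$ is automatic: closedness of $A_\gamma$ and continuity of the $M$-chain force $i_\eta=\sup_{\xi<\eta}i_\xi$ and hence $N_\eta=\bigcup_{\xi<\eta}N_\xi$. For the successor requirement $\{N_\eta:\eta\leq\xi\}\in N_{\xi+1}$, the segment $\langle M_j:j\leq i_\xi\rangle$ already lies in $M_{i_\xi+1}\subseteq N_{\xi+1}$ by internal approachability of the $M$-chain, so the only missing information needed to extract the $N_\eta$'s is the selector $A_\gamma\cap(\rho_{i_\xi}+1)=\{\rho_{i_\eta}:\eta\leq\xi\}$. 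This is precisely where thinness of $\bar A$ is used: since $\rho_{i_\xi}+1\in N_{\xi+1}$ and the family $\{A_{\gamma'}\cap(\rho_{i_\xi}+1):\gamma'<\omega_1\}$ is countable in $V$, elementarity places a surjection from $\omega$ onto this family inside $N_{\xi+1}$, forcing each of its values, and in particular $A_\gamma\cap(\rho_{i_\xi}+1)$, to lie in $N_{\xi+1}$.

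With internal approachability established, $\beta$-properness yields $q\leq p$ generic for $\langle N_\xi:\xi<\beta\rangle$. A standard genericity argument inside each $N_\xi$, using the forced unboundedness of $\dot D$ together with elementary reflection of existential witnesses in $N_\xi$, gives $q\Vdash\dot D\cap\rho_{i_\xi}$ cofinal in $\rho_{i_\xi}$; since $i_\xi$ is a limit, $\rho_{i_\xi}$ is a limit ordinal (of cofinality $\omega$), and because $\dot D$ is forced closed, $q\Vdash\rho_{i_\xi}\in\dot D$ for each $\xi<\beta$. Therefore $q\Vdash A_\gamma\cap C\subseteq\dot D$, and since $\ot(A_\gamma\cap C)=\beta=\ot(A_\gamma)$, this yields $q\Vdash\ot(A_\gamma\cap\dot D)=\beta$, as required.
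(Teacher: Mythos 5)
Your proposal is correct and follows essentially the same strategy as the paper's proof: apply TWCG$_\beta$ in $V$ to the club of $\rho_i$'s arising from an internally approachable chain, use thinness of $\bar A$ together with internal approachability of the ambient chain to see that the relevant subtower is again internally approachable, and then invoke $\beta$-properness to find a condition generic for this subtower, forcing the corresponding $\rho_i$'s into $\dot D$. Your write-up supplies more of the routine details (notably the verification of internal approachability of the subtower) that the paper leaves to the reader.
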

\begin{proof}
  Let $\dot{E}$ be a $P$-name for a club and $p$ a condition
  in $P$. It suffices to prove that there exists an ordinal
  $\rho^\ast$ and condition $q \leq p$ such that $q$ forces
  that the intersection of $\dot{E}$ with $A_{\rho^{\ast}}$
  has order type equal to $\beta$. For so doing, let
  $\kappa$ be a sufficiently large regular cardinal and
  consider an internally approachable tower
  $\mathcal{M}=\langle M_{\varepsilon} : \varepsilon \in
  \omega_{1} \rangle $ of countable elementary substructures
  of $H(\kappa)$ such that $\bar A$, $P$, $\dot{E}$ and $p$
  are in $M_{0}$. Let $F$ be the club of those countable
  ordinals $\rho$ such that $\rho= M_{\rho} \cap \omega_1$.
  Now, by TWCG$_\beta$ (applied in $V$), there exist
  $\rho^{\ast} \in F$ such that $\ot(A_{\rho^{\ast}} \cap
  F)=\beta$. Note that for each $\rho \in A_{\rho^{\ast}}
  \cap F$, any $(M_{\rho}, P)$-generic condition forces that
  $\rho \in \dot{E}$. So, it suffices to prove that there is
  a condition extending $p$ which is generic for all
  elements of the tower $\mathcal{M}^\ast = \langle
  M_{\varepsilon} : \varepsilon \in A_{\rho^{\ast}} \cap F
  \rangle $. Given that $P$ is $\beta$-proper, this can be
  reduced to proving that $\mathcal{M}^\ast$ is internally
  approachable, which is true by the assumptions that $\bar
  A$ is thin and $\mathcal{M}$ is internally approachable.
\end{proof}

\begin{corollary}\label{consistency}
  For every indecomposable ordinal $\gamma<\omega_1$ the
  principle \textnormal{TWCG}$_\gamma$ is consistent with
  \textnormal{BPFA}$_\gamma$ (or \textnormal{PFA}$_\gamma$),
  relative to a reflecting cardinal (or a supercompact).
\end{corollary}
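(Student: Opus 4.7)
The plan is to first adjoin a witness of TWCG$_\gamma$ by a $\sigma$-closed forcing, and then perform the usual iteration of $\gamma$-proper forcings over the resulting model to force BPFA$_\gamma$ (or PFA$_\gamma$), invoking the preservation proposition just established to keep the witness alive.

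First I would start in a ground model $V$ containing a reflecting cardinal $\kappa$ (or a supercompact $\kappa$, for the PFA$_\gamma$ version) and apply the $\sigma$-closed construction of Zapletal from \cite[Section 2]{zapletal}, cited before Theorem \ref{wcg}, to obtain a $\sigma$-closed forcing $R$ that adjoins a thin $\gamma$-ladder system $\bar A = \langle A_\rho : \rho < \omega_1\rangle$ witnessing TWCG$_\gamma$. Since $R$ is $\sigma$-closed it is $<\!\omega_1$-proper, adds no reals, and preserves the reflecting (respectively supercompact) character of $\kappa$ by standard arguments. Let $V_1 = V[H]$ denote the resulting extension.

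Next, working inside $V_1$, I would carry out the standard countable-support iteration $\mathbb{P}$ of length $\kappa$ that forces BPFA$_\gamma$ (respectively PFA$_\gamma$), using at each stage a suitably chosen $\gamma$-proper forcing of size below $\kappa$, bookkept so that every potential instance of the axiom is eventually addressed. By Shelah's preservation theorem for $\alpha$-properness under countable-support iteration, which applies at indecomposable countable ordinals (see \cite[Chapter V]{shelah:proper}), the entire iteration $\mathbb{P}$ is itself $\gamma$-proper. The usual reflection argument then shows that BPFA$_\gamma$ (resp.\ PFA$_\gamma$) holds in $V_1^{\mathbb{P}}$.

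Finally, I would apply the preceding proposition in $V_1$ to the $\gamma$-proper forcing $\mathbb{P}$ and the witness $\bar A$, concluding that $\bar A$ still witnesses TWCG$_\gamma$ in $V_1^{\mathbb{P}}$. Thus both TWCG$_\gamma$ and BPFA$_\gamma$ (or PFA$_\gamma$) hold in the final model. The only delicate point is that the preservation proposition is stated for a single $\gamma$-proper extension rather than for an iteration; this is precisely what Shelah's iteration theorem handles, since it lets us treat the whole of $\mathbb{P}$ as a single $\gamma$-proper forcing and invoke the proposition in one stroke.
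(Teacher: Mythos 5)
Your proof is correct and takes essentially the same approach as the paper: force TWCG$_\gamma$ by a $\sigma$-closed forcing (Zapletal), then run the usual countable-support iteration of $\gamma$-proper forcings, using Shelah's $\gamma$-properness iteration theorem to view the whole iteration as one $\gamma$-proper forcing and the preceding preservation proposition to keep the witness alive. The paper states only the PFA$_\gamma$ version and omits details; you spell them out, including the BPFA$_\gamma$/reflecting-cardinal case and the remark that the initial $\sigma$-closed forcing preserves the large cardinal, but the argument is the same.
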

\begin{proof}
  We prove only the PFA version. The proof is very similar
  to the usual proof of the consistency of PFA, and so we
  omit the details. We start with a ground model with a
  supercompact satisfying TWCG$_\gamma$ (there is one by the
  results of \cite{zapletal}). The generic extension that we
  need is obtained by a countable support iteration of
  length a supercompact cardinal, where in each step of the
  iteration we only consider names for $\gamma$-proper
  partial orders.  Since the countable support iteration of
  $\gamma$-proper forcing notions is $\gamma$-proper
  \cite[Chapter 5, Theorem 3.5]{shelah:proper} and
  $\gamma$-proper forcing preserves TWCG$_\gamma$, we get a
  model of both, PFA$_\gamma$ and TWCG$_\gamma$.
\end{proof}

Together, Theorem \ref{wcg} and Corollary \ref{consistency}
prove Theorem \ref{separation}. The separation of the axioms
PFA$_\alpha$ for indecomposable ordinals $\alpha<\omega_1$
appears in Shelah's \cite[Chapter XVII, Remark
3.15]{shelah:proper}.  We are not aware, however, if the
separation with the bounded versions has ever appeared in
the literature, so we mention it in the following corollary.

\begin{corollary}
  For indecomposable ordinals $\alpha<\beta<\omega_1$,
  \textnormal{BPFA}$_\beta$ (or \textnormal{PFA}$_\beta$)
  does not imply \textnormal{BPFA}$_\alpha$, relative to a
  reflecting cardinal (or a supercompact).
\end{corollary}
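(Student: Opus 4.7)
The plan is to combine the two main results of this section directly. Theorem \ref{wcg} tells us that BPFA$_\alpha$ refutes TWCG$_\beta$ whenever $\alpha<\beta$ are indecomposable. Contrapositively, any model of TWCG$_\beta$ must fail BPFA$_\alpha$. Corollary \ref{consistency}, on the other hand, produces a model of BPFA$_\beta$ (and even PFA$_\beta$) together with TWCG$_\beta$, modulo a reflecting cardinal (respectively a supercompact). So the model witnessing Corollary \ref{consistency} for $\gamma=\beta$ is automatically a model of BPFA$_\beta$ in which BPFA$_\alpha$ fails, which is exactly the required non-implication.

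Concretely, starting from a ground model containing a reflecting cardinal (or a supercompact) and a thin $\beta$-club guessing sequence $\bar{A}$ as supplied by the results of Zapletal cited before Theorem~\ref{wcg}, I would run the countable support iteration of $\beta$-proper forcings described in the proof of Corollary \ref{consistency}. The preservation proposition proved just before Corollary \ref{consistency} ensures that $\bar{A}$ keeps witnessing TWCG$_\beta$ throughout the iteration, while standard bookkeeping yields BPFA$_\beta$ (or PFA$_\beta$) in the final model. In that model BPFA$_\alpha$ must fail by Theorem \ref{wcg}, and the separation follows. There is no real obstacle: the only thing to check is that the two cited results apply to the same indecomposable $\beta$, which is immediate, and that the consistency strengths line up, which they do because the iteration for Corollary \ref{consistency} is exactly the one used for BPFA$_\beta$ or PFA$_\beta$.
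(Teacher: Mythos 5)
Your proposal is correct and matches the paper's own argument exactly: apply Corollary \ref{consistency} with $\gamma=\beta$ to get a model of BPFA$_\beta$ (or PFA$_\beta$) together with TWCG$_\beta$, and then invoke Theorem \ref{wcg} to conclude that BPFA$_\alpha$ fails there.
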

\begin{proof}
  By Corollary \ref{consistency} there is a model of
  BPFA$_\beta$ (or PFA$_\beta$) and TWCG$_\beta$, relative
  to a reflecting cardinal (or a supercompact). It cannot
  satisfy BPFA$_\alpha$ by Theorem \ref{wcg}.
\end{proof}

In the remaining part of this section we will prove Theorem
\ref{separationprinciples}. We will need an additional piece
of notation. Given an indecomposable ordinal $\beta$ and a
cardinal $\kappa \leq \omega_1$, a $(\beta,
\kappa)$-\emph{system} is a sequence $\bar{A}= \langle
A^{\alpha}_{\delta} : \alpha \in \kappa, \delta \in \omega_1
\rangle$ such that for every $\alpha$ and $\delta$, with
$\beta$ dividing $\delta$, the set $A^{\alpha}_{\delta}$ is
a closed unbounded subset of $\delta$ of order type $\beta$.
A $(\beta,\kappa)$-system $\bar A$ is \textit{thin} if for
any $\gamma\in\omega_1$, the set
$\{A^\alpha_\delta\cap\gamma : \alpha<\kappa,
\delta\in\omega_1\}$ is countable.

Note that a $(\beta,\kappa)$-system $\bar A$ can be
enumerated as $(\bar{A}_\delta : \delta<\omega_1)$, but then
we must remember that $\bar{A}_\delta$ need not be cofinal
in $\delta$. Such enumerations will be used in the proof of
Theorem \ref{separationprinciples} below.

The principle WCG$^{\kappa}_{\beta}$ asserts the existence
of a $(\beta, \kappa)$-system $\bar{A}= \langle
A^{\alpha}_{\delta} : \alpha \in \kappa, \delta \in \omega_1
\rangle$ such that for every club $D \subseteq \omega_{1}$,
there exists $\delta \in D$ and $\alpha \in \kappa$ such
that $\beta$ divides $\delta$ and $ot(A^{\alpha}_{\delta}
\cap D)= \beta$. The principle TWCG$^{\kappa}_{\beta}$ says
exactly the same that WCG$^{\kappa}_{\beta}$ with the
additional requirement that $\bar{A}$ must be thin.

\begin{lemma}\label{omegalemma}
  For any indecomposable ordinal $\beta$,
  \textnormal{TWCG}$_\beta$ is equivalent to
  \textnormal{TWCG}$^{\aleph_0}_{\beta}$ and the same holds
  for the non-thin versions.
\end{lemma}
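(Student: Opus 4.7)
The plan is to reduce the equivalence to a countable diagonalization, exploiting that ``$\aleph_0$ many witnesses'' can be collapsed to a single one of them without any new combinatorics. The easy direction, from (T)WCG$_\beta$ to (T)WCG$^{\aleph_0}_\beta$, is immediate: given a witness $\bar A=\langle A_\delta:\delta<\omega_1\rangle$, set $A^n_\delta:=A_\delta$ for every $n<\omega$; the resulting constant $(\beta,\omega)$-system retains the guessing property, and its collection of traces below any $\gamma$ does not grow, so thinness is preserved.

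For the substantive direction, suppose $\bar A=\langle A^n_\delta:n<\omega,\,\delta<\omega_1\rangle$ is a (thin) $(\beta,\omega)$-system witnessing (T)WCG$^{\aleph_0}_\beta$. The key step will be to show that already some row $\bar A^n:=\langle A^n_\delta:\delta<\omega_1\rangle$ is by itself a (thin) $\beta$-ladder system witnessing (T)WCG$_\beta$. I argue by contradiction: if no row works, then for each $n<\omega$ there is a club $D_n\subseteq\omega_1$ with $\ot(A^n_\delta\cap D_n)<\beta$ for every $\delta$ such that $\beta\mid\delta$. Setting $D:=\bigcap_{n<\omega}D_n$, which is still a club, one obtains $\ot(A^n_\delta\cap D)\leq\ot(A^n_\delta\cap D_n)<\beta$ for every $n$ and every admissible $\delta$ simultaneously, contradicting the assumed (T)WCG$^{\aleph_0}_\beta$-guessing of $\bar A$ at $D$. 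Thinness of any row is automatic, since $\{A^n_\delta\cap\gamma:\delta<\omega_1\}$ is a subset of the countable set $\{A^\alpha_\delta\cap\gamma:\alpha<\omega,\,\delta<\omega_1\}$ for each $\gamma<\omega_1$.

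No step looks delicate: the whole argument is an elementary diagonalization, and the point is precisely that failure of (T)WCG$_\beta$ for a single $\beta$-ladder system is witnessed by a single club, while countably many such failures can be amalgamated by intersection. In particular, one never needs to build a new closed cofinal subset of $\delta$ of order type $\beta$ out of the $A^n_\delta$'s, which would be the one genuinely combinatorial task and, because of the shape of indecomposable $\beta$, could not be carried out by a naive union.
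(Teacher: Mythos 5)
Your proof is correct, but it follows a genuinely different (and simpler) route from the paper's. For the nontrivial direction the paper constructs a single new $\beta$-ladder system $\langle B_\delta\rangle$ out of the $(\beta,\aleph_0)$-system: for each $\delta$ divisible by $\beta$ it fixes an $\omega$-sequence $\langle\delta_n : n<\omega\rangle$ cofinal in $\delta$ and sets $B_\delta=\bigcup_{n<\omega}\bigl(A^n_\delta\setminus\delta_n\bigr)$. Since the deleted initial segments grow cofinally in $\delta$, below any fixed $\gamma<\delta$ only finitely many rows contribute; this preserves thinness and closure, and, via indecomposability of $\beta$, forces $\ot(B_\delta)=\beta$. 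Your argument sidesteps the gluing entirely by showing that some single row $\langle A^n_\delta:\delta<\omega_1\rangle$ must already witness the guessing: if each row $n$ were defeated by a club $D_n$, then the club $D=\bigcap_{n<\omega}D_n$ would defeat the whole $(\beta,\aleph_0)$-system, contradicting the hypothesis. This countable diagonalization is shorter, produces no new combinatorial object, and uses nothing about indecomposability beyond what is built into the definitions. One small inaccuracy in your closing remark: the paper \emph{does} construct a new ladder system by a union --- not a naive one (which could have order type as large as $\beta\cdot\omega$), but one in which ever larger initial segments are trimmed from successive rows precisely so that the order type stays at $\beta$.
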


\begin{proof}
  The two statements have the same proof. We only focus on
  the thin versions and we show that
  TWCG$^{\aleph_0}_{\beta}$ implies TWCG$_\beta$.  So, let
  $\langle A^{n}_{\delta} : n \in \omega, \delta \in
  \omega_1 \rangle$ be a $(\beta, \aleph_0)$-system
  witnessing TWCG$^{\aleph_0}_{\beta}$. We define a thin
  $\beta$-ladder system $\langle B_{\delta} : \delta \in
  \omega_1 \rangle$ as follows. First, for each $\delta$
  divisible by $\beta$ fix a cofinal sequence $\langle
  \delta_{n} : n \in \omega \rangle \subseteq \delta$ of
  order type $\omega$. Define $B_{\delta}=
  \bigcup\{B^{n}_{\delta} : n \in \omega \}$, where
  $B^{n}_{\delta}$ is equal to $A^{n}_{\delta}\setminus
  \delta_{n}$ . Now $\langle B_{\delta} : \delta \in
  \omega_1 \rangle$ is a thin system. To see this, notice
  that for each $\gamma\in\omega_1$ if $\delta>\gamma$,
  $\delta\in\omega_1$ is divisible by $\beta$, then only
  finitely many of $\delta_n$'s are below $\gamma$ and hence
  $B_\delta\cap\gamma$ is a union of finitely many of the
  sets $A^n_\delta\cap\gamma\setminus\delta_n$. The fact
  that $\langle B_{\delta} : \delta \in \omega_1 \rangle$
  witnesses TWCG$_\beta$ follows directly from the
  assumption that $\langle A^{n}_{\delta} : n \in \omega,
  \delta \in \omega_1 \rangle$ witnesses
  TWCG$^{\aleph_0}_{\beta}$
\end{proof}

Now we are ready to prove Theorem
\ref{separationprinciples}.

\begin{proof}[Proof of Theorem \ref{separationprinciples}]
  The fact that TWCG$_\alpha$ does not imply TWCG$_\beta$
  follows directly from Theorem \ref{separation}.
  Alternately, to derive this just in ZFC, one can start
  with a model of TWCG$_\alpha$ + CH +
  $2^{\aleph_1}=\aleph_2$ and then force with a
  countable-support iteration of length $\omega_2$ of
  $\alpha$-proper forcings, killing all $\beta$-thin club
  sequences.

  Now we prove that TWCG$_\beta$ implies TWCG$_\alpha$.
  Assume TWCG$_\alpha$ fails. We will show by induction on
  $\alpha'\in[\alpha,\beta]$ that TWCG$_{\alpha'}$ fails.
  In fact, we will show that if
  $\bar{A}=\{\bar{A}_{\delta}:\delta\in\omega_1\}$
  enumerates a thin $(\alpha', \aleph_0)$-system, then there
  exists a club $D$ such that for every $\delta\in\omega_1$
  the intersection of $\bar{A}_{\delta}$ with $D$ has order
  type strictly less than $\alpha$. The case
  $\alpha'=\alpha$ follows from Lemma \ref{omegalemma}.
  Assume $\alpha'>\alpha$ and fix an enumeration
  $\{\bar{A}_{\delta}:\delta\in\omega_1\}$ of a thin
  $(\alpha', \aleph_0)$-system $\bar{A}$. For each ordinal
  $\delta$ find an increasing cofinal sequence $\{\delta_n:
  n\in\omega\}\subseteq\bar{A}_{\delta}$ of limit points of
  the set $\bar{A}_{\delta}$ such that the order types of
  $$A'(\delta, 0)=\bar{A}_{\delta}\cap\delta_{0}$$ and
  $$A'(\delta, n+1)=(\bar{A}_\delta\cap\delta_{n+1})
  \setminus\delta_n$$ are indecomposable ordinals greater
  than or equal to $\alpha$.  Now, consider the thin system
  enumeration $$A'=\{A'(\delta,n):\delta\in\omega_1,
  n\in\omega\},$$ and note that for each indecomposable
  $\pi$ in the semi-open interval $[\alpha,\alpha')$, the
  inductive hypothesis ensures the existence of a club
  $C_{\pi}$ such that for every $\delta$ and for every $n$
  if the order type of $A'(\delta,n)$ is equal to $\pi$,
  then $\ot(A'(\delta, n) \cap C_{\pi})<\alpha$. Note that
  if $\bar{A}$ is thin, then the set of those elements of
  $A'$ whose order type is equal to $\pi$ is a $(\pi,
  \aleph_0)$-system. Let $C$ be the intersection of all the
  $C_{\pi}$.  Now define the set $\bar{B}_{\delta}$ as
  follows
  $$\bar{B}_{\delta}=\{ \delta_{n} : n\in\omega\} \cup
  \bigcup \{A'(\delta, n) \cap C : n \in \omega\}.$$

  Note that this set has order type at most $\alpha$. Also
  note that if $\gamma<\sup\bar{A}_\delta$, then
  $\bar{B}_\delta\cap\gamma$ is equal to the union of
  $\bar{A}_\delta\cap\gamma\cap C$ together with a finite
  subset of $\sup\bar{A}_\delta$. Therefore the system
  $\bar{B}=\{\bar{B}_\delta:\delta\in\omega_1\}$ is thin.
  Finally, find a club D subseteq C witnessing that the
  system $\bar{B}$ does not guess in the $(\alpha,
  \aleph_0)$-sense. Now $D$ is as desired.
\end{proof}

\section{Forcings embeddable into \sigmaclosedccc}
\label{sec:embeddable}

Recall that a forcing notion $P$ satisfies the
\textit{uniform Axiom A} if there is an ordering $\leq_0$ on
$P$ refining its original ordering such that any
$\leq_0$-descending $\omega$-sequence has a $\leq_0$-lower
bound and for any antichain $A$ in $P$ any condition can be
$\leq_0$-extended to become compatible with at most
countably many elements of $A$. By a \textit{quasi-order} we
mean a reflexive and transitive relation.

Ishiu showed \cite[Theorem 4.3]{ishiu} that, up to
forcing-equivalence, Axiom A and uniform Axiom A are
equivalent and describe precisely the class of
$<\!\!\omega_1$-proper quasi-orders.  More precisely, he
showed that if $P$ is an Axiom A forcing notion, then there
is a quasi-order $P'$ which is forcing-equivalent to $P$ and
an ordering $\leq_0$ on $P'$ such that $P'$ satisfies the
uniform Axiom A via $\leq_0$.  This is a motivation for the
following definition.

\begin{definition}
  A forcing notion $P$ satisfies \textit{strong Axiom A} if
  there a quasi-order $P'$, forcing-equivalent to $P$, with
  an ordering $\leq_0$ on $P'$ such that for any
  $\sigma$-closed forcing $S$ the product $S\times P'$
  satisfies uniform Axiom A via $\leq_S\times\leq_0$.
\end{definition}

Any forcing of the form $R*\dot Q$, where $R$ is
$\sigma$-closed and $\dot Q$ is forced to be ccc, satisfies
the uniform Axiom A.  The ordering $\leq_0$ on $R*\dot Q$ is
simply $\leq_R\!\times\dot =$, i.e. $(r_1,\dot
q_1)\leq_0(r_0,\dot q_0)$ if $r_1\leq_R r_0$ and $r_1\Vdash
\dot q_0=\dot q_1$. To see that $\leq_0$ witnesses the
uniform Axiom A, take an antichain $\A$ in $R*\dot Q$ and a
condition $(r_0,\dot q_0)\in R*\dot Q$. Pick any $R$-generic
filter $G$ over $V$ through $r_0$ and note that in $V[G]$ we
have that $\{(\dot q)\slash G:\exists r\in G\ (r,\dot
q\slash G)\in\A\}$ is an antichain in $\dot Q\slash G$ and
hence it is countable by the assumption that $R\Vdash\dot Q$
is ccc.  Note that for each $(\dot q)\slash G$ in the above
set, there is only one $r\in G$ such that $(r,\dot q)\in\A$,
as $\A$ is an antichain.  Since $R$ does not add new
countable subsets of the ground model, there is a countable
$\A_0\subseteq\A$ in $V$ such that for some condition $r'\in
G$ we have
\begin{displaymath}\label{forcing}
  r'\Vdash\{(r,\dot q)\in\A: r\in\dot G\}=\A_0.\tag{$*$}
\end{displaymath}
Enumerate $\A_0$ as $\{(r_n',\dot q_n):n<\omega\}$. Since
$r_0,r'$ and all the $r'_n$ are in $G$, we can find $r_1\in
R$ extending all these conditions. Now we have that
$(r_1,\dot q_0)\leq_0 (r_0,\dot q_0)$ and it is enough to
check that $\{(r,\dot q)\in\A: (r,\dot q)$ is compatible
with $(r_1,\dot q_0)\}$ is contained in $\A_0$. But if
$(r'',\dot q'')\in\A\setminus\A_0$ were compatible with
$(r_1,\dot q_0)$, then forcing with a filter $G$ such that
$r'',r_1\in G$ would give that $(r'',\dot q'')\in\{(r,\dot
q)\in\A:r\in G\}$, contradicting (\ref{forcing}).

Recall that if $A$ is a complete Boolean algebra and $B$ is
a complete Boolean subalgebra of $A$, then the
\textit{projection} $\pi:A\rightarrow B$ is defined as
follows: $\pi(a)=\bigwedge\{b\in B: a\leq b\}$, where the
Boolean operation is computed in either of the two Boolean
algebras.

Now we prove Theorem \ref{strongaa}

\begin{proof}[Proof of Theorem \ref{strongaa}]
  (ii)$\Rightarrow$(i). Suppose $P\lessdot R*\dot Q$, where
  $R$ is $\sigma$-closed and $\dot Q$ is forced to be ccc.
  Without loss of generality assume that $P$ is a complete
  Boolean subalgebra of $\ro(R*\dot Q)$ and let
  $\pi:\ro(R*\dot Q)\rightarrow P$ be the projection. Let
  $$P'=\{(p,(r,\dot q)):p\in P, (r,\dot q)\in R*\dot Q\mbox{
    and } p\wedge (r,\dot q)\not=0\},$$ where the Boolean
  operation is computed in $\ro(R*\dot Q)$. Consider the
  function $\pi':P'\rightarrow P$ defined as:
  $$\pi'((p,(r,\dot q)))=p\wedge\pi((r,\dot q))$$ and define
  the order $\leq_{P'}$ on $P'$ as follows: $(p_1,(r_1,\dot
  q_1))\leq_{P'}(p_0,(r_0,\dot q_0))$ if
  $\pi'((p_1,(r_1,\dot q_1)))\leq_{P}\pi'((p_0,(r_0,\dot
  q_0)))$. Thus $P'$ becomes a quasi-order with $\leq_{P'}$.
  Note that the definition of $\leq_{P'}$ implies that the
  function $\pi'$ is a dense embedding from $P'$ to $P$,
  hence $P'$ and $P$ are forcing-equivalent.

  Recall that on $R*\dot Q$ we have the natural ordering
  $\leq_R\!\times\dot=$ (see remarks preceeding this
  theorem) to witness uniform Axiom A. Let $\leq_0$ on $P'$
  be defined as follows: $(p_1,(r_1,\dot
  q_1))\leq_0(p_0,(r_0,\dot q_0))$ if $p_1=p_0$, $r_1\leq_R
  r_0$ and $r_1\Vdash \dot q_1=\dot q_0$. Now we claim that
  this $\leq_0$ witnesses the strong Axiom A.

  Let $S$ be a $\sigma$-closed forcing notion. We need to
  check that $S\times P'$ satisfies uniform Axiom A via
  $\leq_S\times\leq_0$. It is clear that $S\times P'$ is
  $\sigma$-closed with respect to $\leq_S\times\leq_0$. Take
  an antichain $A$ in $S\times P'$, $s\in S$ and $(p,(r,\dot
  q))\in P'$. Via $\id\times \pi'$ we get an antichain $A'$
  in $S\times P$. As every element of $\ro(R*\dot Q)$ is a
  supremum of an antichain in $R*\dot Q$, we can refine the
  antichain $A'$ to an antichain $A''$ such that 
  \begin{itemize}
  \item[(a)] every element of $A''$ is of the form
    $(s,(r,\dot q))$ for some $s\in S$ and $(r,\dot q)\in
    R*\dot Q$,
  \item[(b)] every element of $A'$ is the supremum of a
    subset of $A''$.
  \end{itemize} 
  Now, $A''$ is an antichain in $S\times(R*\dot Q)$.  The
  latter is the same as $(S\times R)*\dot Q$ (where $\dot
  Q$, as an $R$-name naturally becomes an $S\times R$-name).
  We need the following lemma.

  \begin{lemma}\label{sigmaccc}
    Let $T$ be a $\sigma$-closed forcing notion and $C$ be
    ccc. Then $$T\Vdash\check C \mbox{ is ccc}.$$
  \end{lemma}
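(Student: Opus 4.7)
The plan is to argue by contradiction. Suppose $T$ does not force $\check C$ to be ccc. Then there is $t \in T$ and a $T$-name $\dot A$ such that $t \Vdash \dot A = \langle \dot a_\alpha : \alpha < \omega_1\rangle$ is an injective enumeration of an antichain in $\check C$. The goal will be to extract, in $V$, an uncountable antichain in $C$, contradicting ccc-ness of $C$ in the ground model.

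Next, I would build by recursion on $\alpha < \omega_1$ a $\leq_T$-decreasing sequence $\langle t_\alpha : \alpha < \omega_1\rangle$ of conditions below $t$, together with elements $a_\alpha \in C$ in $V$, such that $t_\alpha \Vdash \dot a_\alpha = \check a_\alpha$. At stage $0$ and at successor stages, one uses that $t_\alpha$ (or $t$) forces $\dot a_\alpha$ to lie in $\check C$ to find a stronger condition deciding $\dot a_\alpha$ as some ground-model $\check a_\alpha$. At limit stages $\alpha < \omega_1$, the cofinality of $\alpha$ is $\omega$, so picking a cofinal $\omega$-sequence $\beta_n \uparrow \alpha$ and applying $\sigma$-closedness of $T$ to $\langle t_{\beta_n}\rangle$ yields a lower bound, which can then be further refined to decide $\dot a_\alpha$.

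Finally, I would check that $\{a_\alpha : \alpha < \omega_1\}$ is an antichain in $C$ in $V$: for $\alpha < \beta$ the condition $t_\beta$ extends both $t_\alpha$ and $t$, hence forces $\check a_\alpha = \dot a_\alpha \perp \dot a_\beta = \check a_\beta$ in $\check C$; since incompatibility in $\check C$ of two ground-model elements is absolute, $a_\alpha \perp a_\beta$ in $C$. Thus $\{a_\alpha : \alpha < \omega_1\}$ is an uncountable antichain in $C$, contradicting the assumption that $C$ is ccc.

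The only real delicacy is making sure the recursion survives all $\omega_1$ steps. This is not an obstacle, however, because every limit ordinal below $\omega_1$ has countable cofinality, so $\sigma$-closedness (closure under $\omega$-descending sequences) is exactly what is needed to obtain lower bounds at every limit stage; no stronger closure property of $T$ is required.
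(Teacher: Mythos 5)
Your proof is correct and follows essentially the same route as the paper's: assume a name for an uncountable antichain, build an $\omega_1$-length $\leq_T$-decreasing sequence of conditions each deciding the corresponding element (using $\sigma$-closedness at the countable-cofinality limit stages), and read off an uncountable antichain in $C$ in the ground model. You have simply filled in the limit-stage bookkeeping that the paper leaves implicit.
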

  \begin{proof}
    Suppose not. Let $\{\dot c_\alpha:\alpha<\omega_1\}$ be
    a $T$-name for an antichain in $\check C$. Since $T$ is
    $\sigma$-closed, we can build a descending sequence
    $\langle t_\alpha\in T:\alpha<\omega_1\rangle$ and a
    sequence of conditions $\langle c_\alpha\in
    C:\alpha<\omega_1\rangle$ such that $$t_\alpha\Vdash\dot
    c_\alpha=\check c_\alpha.$$ But then $\{\check
    c_\alpha:\alpha<\omega_1\}$ is an uncountable antichain
    in $C$, a contradiction.
  \end{proof}

  Now, Lemma \ref{sigmaccc} implies that if $G$ is any
  $R$-generic over $V$, then in $V[G]$ we have $$S\Vdash
  {\dot Q\slash G}\mbox{ is ccc.}$$ This means that $R\Vdash
  ``S\Vdash \dot Q$ is ccc'', or in other words, $R\times
  S\Vdash\dot Q$ is ccc. Since $S\times R=R\times S$, by the
  remarks preceeding this theorem, we get that
  $\leq_{S\times R}\!\times\dot=$ witnesses uniform Axiom A
  for $(S\times R)*\dot Q$. 

  Therefore, there are $s'\leq_S s$ and $r'\leq_R r$ such
  that $(s',(r',\dot q))$ is compatible with only countably
  many elements of $A''$. By (b) above, $(s',r',\dot q)$ is
  compatible with only countably many elements of $A'$ and
  so is $(s',\pi(r',\dot q))$ since $A'\subseteq S\times P$.
  Since $(s',\pi'(p,(r',\dot q)))\leq_{S\times P}
  (s',\pi(r,\dot q))$ and by the definition of $\leq_{P'}$,
  we get that $(s',(p,(r',\dot q)))$ is compatible with only
  countably many elements of $A$. We also have
  $$(s',(p,(r',\dot q)))\leq_S\times\!\leq_0 (s,(p,(r,\dot
  q))),$$ hence $\leq_S\times \leq_0$ witnesses uniform
  Axiom A for $S\times P'$. This ends the proof of
  implication (ii)$\Rightarrow$(i).

  (i)$\Rightarrow$(ii). Suppose $P$ satisfies strong Axiom
  A. Since embeddability into \sigmaclosedccc\ is invariant
  under forcing-equivalence, we can assume that the ordering
  $\leq_0$ witnessing strong Axiom A is defined on $P$. We
  shall construct a $\sigma$-closed forcing notion $R$ and
  an $R$-name $\dot Q$ for a ccc forcing such that
  $P\lessdot R*\dot Q$. Let $R$ be the forcing with
  countable subsets of $P$ ordered as follows: for
  $\pi_0,\pi_1\subseteq P$ countable write $\pi_1\leq\pi_0$
  if
  \begin{itemize}
  \item for each $p\in\pi_0$ there is $q\in\pi_1$ such that
    $q\leq_0 p$,
  \item for each $q\in\pi_1$ the set $\pi_0$ is predense
    below $q$.
  \end{itemize}
  Note that $R$ is $\sigma$-closed.  In any $R$-generic
  extension the union of the countable subsets of $P$ which
  belong to the generic filter forms a suborder of $P$. Let
  $\dot Q$ be the canonical name for this subset. We will
  show that $P\lessdot R*\dot Q$ and that $\dot Q$ is forced
  to be ccc.

  \begin{lemma}
    The forcing $R*\dot Q$ adds a generic filter for $P$.
  \end{lemma}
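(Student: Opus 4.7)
The plan is to define $\dot K$ as the canonical $R*\dot Q$-name for the upward closure in $P$ of the $\dot Q$-generic filter, and to show that $\dot K$ is forced to be a $V$-generic filter on $P$. Given an $R*\dot Q$-generic $G$ over $V$, split it into $G_R\subseteq R$ and $H$ a $V[G_R]$-generic filter for $Q:=\bigcup G_R$, which is a suborder of $P$ with the inherited order. Set $K=\{p\in P:\exists q\in H,\ q\leq_P p\}$. Upward closure is immediate, and directedness follows from $H$ being a filter in $Q$: given $p_1,p_2\in K$ with witnesses $q_1,q_2\in H$, any common lower bound $q\in H$ of $q_1,q_2$ serves as the required lower bound in $P$.

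The heart of the matter is the density claim: for every dense open $D\subseteq P$ in $V$, the set $D\cap Q$ is forced to be dense in $Q$ in $V[G_R]$. This reduces to the following statement, to be proved in $V$: for any $\pi_0\in R$, $q\in\pi_0$, and dense open $D\subseteq P$, the collection $\{\pi\leq\pi_0:\exists p\in\pi\cap D,\ p\leq_P q\}$ is dense below $\pi_0$. Given $\pi_1\leq\pi_0$, clause (a) of the $R$-order supplies $p'\in\pi_1$ with $p'\leq_0 q$. Pick a maximal antichain $A\subseteq D$, which is automatically maximal in $P$ since $D$ is dense. Applying uniform Axiom A for $P$ (which follows from strong Axiom A by taking $S$ trivial) to $p'$ and $A$ via $\leq_0$, obtain $p^*\leq_0 p'$ compatible with at most countably many elements of $A$. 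Maximality forces $p^*$ to be compatible with some $a\in A$; a common extension $p^{**}$ of $p^*$ and $a$ lies in $D$ (as $D$ is open) and satisfies $p^{**}\leq_P q$.

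To close the loop, set $\pi_2=\pi_1\cup\{p^{**}\}$. Clause (a) for $\pi_2\leq\pi_1$ holds trivially since $\pi_1\subseteq\pi_2$. For clause (b) the only new requirement concerns $p^{**}$, and here I use that $p^{**}\leq_P p'\in\pi_1$: every extension of $p^{**}$ lies below $p'$, hence is compatible with $p'$, so $\pi_1$ is predense below $p^{**}$. Thus $\pi_2\leq\pi_1$ and $\pi_2$ lies in the target set, proving density. By genericity, $D\cap Q$ is dense in $Q$ inside $V[G_R]$, so $H\cap D\neq\emptyset$ and therefore $K\cap D\neq\emptyset$. The delicate point, and where the specific shape of the $R$-order is essential, is the asymmetry between clauses (a) and (b): the $\leq_0$-refinement in (a) is what allows us to feed $q$ into uniform Axiom A with a $\leq_0$-descendant $p'$, while the weaker predensity requirement in (b) provides enough slack to adjoin a newly found element of $D$ related to $\pi_1$ only through $\leq_P$. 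This asymmetry is the only nonroutine ingredient of the argument.
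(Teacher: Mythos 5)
Your proof is correct and follows essentially the same route as the paper's: verify a density statement in $R$, using the asymmetry of clauses (a) and (b) of the $R$-order to tack a single new element onto a condition of $R$. One thing worth flagging, though: the detour through uniform Axiom A (choosing a maximal antichain $A\subseteq D$ and applying Axiom A to $p'$ to get $p^*$) is unnecessary. Having found $p'\in\pi_1$ with $p'\leq_0 q$, and hence $p'\leq_P q$, you can simply invoke density of $D$ to pick any $p^{**}\in D$ with $p^{**}\leq_P p'$ and set $\pi_2=\pi_1\cup\{p^{**}\}$; the verification of $\pi_2\leq\pi_1$ goes through unchanged. So the ``nonroutine ingredient'' you identify is really just the slack in clause (b), which lets you adjoin a $\leq_P$-extension of something in $\pi_1$; the Axiom A step adds nothing here (it is needed for $\sigma$-closedness of $R$ and for the ccc lemma later, but not for this density claim). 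The paper's proof is exactly yours with that detour removed.
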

  \begin{proof}
    We show that $R$ forces that the $\dot Q$-generic filter
    is $P$-generic over $V$. It is enough to show that for
    any dense open set $D\subseteq P$ and $p\in P$ the set
    $$\{\pi\in R: (\pi\Vdash p\notin\dot Q)\,\vee\,(\exists
    d\in\pi\ p\in D\,\wedge\, d\leq p)\}$$ is dense in $R$.
    Take any $\pi\in R$ and suppose $\pi\Vdash p\in\dot Q$.
    There is $\pi'\leq\pi$ and $p'\leq p$ such that
    $p'\in\pi'$. Pick $d\in D$ such that $d\leq p'$. Then
    $\pi'\cup\{d\}\leq\pi$ is as needed.
  \end{proof}

  Note now that for any $\pi\in R$ we have
  \begin{displaymath}\label{predense}
    \pi\Vdash\pi\mbox{ is predense in }\dot Q.\tag{$\star$}
  \end{displaymath}
  Indeed, if $\pi'\leq\pi$ and $\pi'\Vdash p\in\dot Q$, then
  there is $\pi''\leq\pi'$ and $q\in\pi''$ such that $q\leq
  p$. Since $\pi''\leq\pi$, there is $r\in\pi$ and $t\leq
  r,q$. Now $\pi''\cup\{t\}\Vdash t\leq r,q$.
  
  We will be done once we prove the following.

  \begin{lemma}
    $R$ forces that $\dot Q$ is ccc.
  \end{lemma}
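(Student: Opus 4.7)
The plan is to assume $\dot Q$ is forced not to be ccc and derive a contradiction using the uniform Axiom A of $R \times P$ granted by strong Axiom A of $P$ with $S=R$. Fix $\pi \in R$ with $\pi \Vdash$ ``$\{\dot p_\alpha : \alpha < \omega_1\}$ is an antichain in $\dot Q$''. For each $\alpha < \omega_1$, choose $\pi_\alpha \leq_R \pi$ deciding $\dot p_\alpha = \check p_\alpha$ for some $p_\alpha \in P$; by refining $\pi_\alpha$ (as in the preceding lemma, since $\pi_\alpha \Vdash p_\alpha \in \dot Q$) we may assume $p_\alpha \in \pi_\alpha$.

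The first key step is to show that $A := \{(\pi_\alpha, p_\alpha) : \alpha < \omega_1\}$ is an antichain in the product $R \times P$. Given a putative common extension $(\pi', p') \leq (\pi_\alpha, p_\alpha), (\pi_\beta, p_\beta)$, the condition $\pi'$ forces $p_\alpha, p_\beta \in \dot Q$ while $p' \leq_P p_\alpha, p_\beta$; exploiting both clauses of the definition of $\leq_R$ (the $\leq_0$-extension clause producing witnesses in $\pi'$ lying below $p_\alpha, p_\beta$, and the predensity clause controlling what lies beneath them), one manufactures a further refinement $\pi'' \leq_R \pi'$ that places a common lower bound of $p_\alpha, p_\beta$ into $\dot Q$, contradicting the antichain property of $\{\dot p_\alpha\}$ in $\dot Q$.

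Having established $A$ as an antichain in $R \times P$, apply the uniform Axiom A of $R \times P$ via $\leq_R \times \leq_0$ (granted by strong Axiom A of $P$ with $S=R$) to $A$ and a starting condition of the form $(\pi, q)$ with $q \in \pi$: one obtains $(\pi^*, p^*)$ with $\pi^* \leq_R \pi$, $p^* \leq_0 q$, and compatible in $R \times P$ with at most countably many members of $A$. Since $p^* \leq_P q \in \pi$, the extension $\pi^\sharp := \pi^* \cup \{p^*\}$ is a valid $\leq_R$-refinement of $\pi^*$, so $\pi^\sharp$ forces $p^* \in \dot Q$. Within the resulting generic extension, the predensity of $\pi^*$ in $\dot Q$ coming from $(\star)$ together with a pigeonhole over the countable set $\pi^*$ shows that some element of $\pi^*$ must be compatible in $\dot Q$ with uncountably many $\dot p_\alpha$; combined with the uniform Axiom A conclusion (that $p^*$ is $\leq_P$-compatible with only countably many $p_\alpha$), this contradicts the surviving $\omega_1$-sized antichain.

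The main obstacle is the first step: verifying that $A$ is genuinely an antichain in $R \times P$. A common extension $(\pi',p')$ in the product merely yields a common lower bound $p' \in P$, which need not lie in $\dot Q$. One must carefully combine the two clauses of the definition of $\leq_R$ to find a refined condition $p'' \leq_P p'$ for which $\pi'$ is predense from below, so that adjoining $p''$ produces a legitimate extension of $\pi'$ in $R$ forcing $p'' \in \dot Q$. Passing from compatibility in $P$ to compatibility in $\dot Q$ is precisely where the proof uses the asymmetric shape of the $\leq_R$-ordering.
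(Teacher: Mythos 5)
Your step from a single application of uniform Axiom A to a genuine antichain in $R\times P$ is where the argument breaks. Suppose $(\pi',p')\leq_{R\times P}(\pi_\alpha,p_\alpha),(\pi_\beta,p_\beta)$. You know $\pi'\leq_R\pi_\alpha,\pi_\beta$ and $p'\leq_P p_\alpha,p_\beta$, but $p_\alpha,p_\beta$ themselves need not belong to $\pi'$ — the first clause of $\leq_R$ only supplies some $q_\alpha\in\pi'$ with $q_\alpha\leq_0 p_\alpha$, and $p'$ need not be compatible with $q_\alpha$ (both lie below $p_\alpha$, but can be incompatible below it). Adjoining a ``refined'' $p''\leq p'$ to $\pi'$ requires $\pi'$ to be predense below $p''$, and nothing in the definition of $\leq_R$ guarantees that any refinement of $p'$ has this property: $p'$ may be incompatible with every element of $\pi'$. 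So ``one manufactures a further refinement'' is precisely the step that fails, and $A$ may well fail to be an antichain. The paper avoids this by building the antichain from a $\leq_R$-refining chain of \emph{maximal antichains} $C_\xi\subseteq R$ deciding $\dot a_\xi$, together with witnesses $b^\rho\in\rho$. If two members of the resulting set $D$ are $R$-compatible, refinement forces one $R$-component to extend the other, say $\rho_1\leq_R\rho_0$; then $b^{\rho_1}\in\rho_1$ makes it legitimate to adjoin a common $P$-lower bound $c$ of $b^{\rho_0},b^{\rho_1}$ (because $c\leq b^{\rho_1}\in\rho_1$ yields the needed predensity), and this puts both into $\dot Q$ and contradicts the antichain. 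That the witness lies \emph{inside} the $R$-condition is the whole point, and your $A$ lacks that feature.

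There is a second gap at the end. One application of uniform Axiom A gives you control over a single $p^*\leq_0 q$: only countably many $p_\alpha$ are $P$-compatible with $p^*$. But the pigeonhole over the predense countable set $\pi^*$ only tells you that \emph{some} member of $\pi^*$ meets uncountably many $\dot p_\alpha$, and you have no control over which member that is. To turn this into a contradiction you need the countable-compatibility conclusion to hold simultaneously for every member of a predense set. This is exactly what the paper's bookkeeping fusion achieves: it applies the sublemma to every $p\in\pi_n$, records the countable sets $A_p$, and uses $\leq_0$-$\sigma$-closedness to produce $\pi_\omega$ whose members all descend (in $\leq_0$) below the refined witnesses. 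That fusion step is not optional; without it the pigeonhole has nothing to bite on.
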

  \begin{proof}
    Suppose that $\dot A$ is an $R$-name for an uncountable
    antichain in $\dot Q$. Assume that $\dot A$ is forced to
    be of cardinality $\omega_1$, namely $R\Vdash\dot
    A=\{\dot a_\alpha:\alpha<\omega_1\}$.
    \begin{sublemma}\label{sublemma}
      For each $\pi\in R$ and $p\in\pi$ there are
      $\pi'\leq\pi$, $p'\leq_0 p$ such that $p'\in\pi'$ and
      a countable $A_p\subseteq P$ such that $$\pi'\Vdash
      \{a\in\dot A: a\mbox{ \textnormal{is incompatible
          with} } p'\}\subseteq A_p.$$
    \end{sublemma}
    \begin{proof}
      We build an antichain in $R\times P$.  Let
      $C_0\subseteq R$ be a maximal antichain below $\pi$
      deciding $\dot a_0$ and such that for every $\rho\in
      C_0$ there is $b^{\rho}\in\rho$ such that
      $b^{\rho}\leq a$, where $a\in P$ is such that
      $\rho\Vdash a=\dot a_0$. Let $D_0 =\{(\rho, b^{\rho})
      : \rho\in C_0\}$. For $\xi<\omega_1$ use the fact that
      $R$ is $\sigma$-closed to find a maximal antichain
      $C_\xi$ below $\pi$ which refines all $C_\alpha$ for
      $\alpha<\xi$, decides $\dot a_\xi$ and for every
      $\rho\in C_\xi$ there is $b^{\rho}\in\rho$ such that
      $b^{\rho}\leq a$, where $a\in P$ is such that
      $\rho\Vdash a=\dot a_\xi$. Let $D_\xi =\{(\rho,
      b^{\rho}):\rho\in C_\xi\}$.

      Now $D=\bigcup_{\xi<\omega_1}D_\xi$ is an antichain in
      $R\times P$. To see that it is enough to check that if
      $\xi_0<\xi_1$, $(\rho_0, b^{\rho_0})\in D_{\xi_0}$,
      $(\rho_1, b^{\rho_1})\in D_{\xi_1}$ and $\rho_1\leq
      \rho_0$, then $b^{\rho_0}$ and $b^{\rho_1}$ are
      incompatible in $P$. Suppose $c\leq b^{\rho_0},
      b^{\rho_1}$ and put $\rho= \rho_1\cup\{c\}$. Then
      $$\rho\Vdash c\in\dot Q\mbox{ and } c\leq b^{\rho_1},
      b^{\rho_0}$$ and hence $\rho\Vdash\dot a_{\xi_0},\dot
      a_{\xi_1}$ are compatible. This is a contradiction.

      Since $R\times P$ satisfies uniform Axiom A via
      $\leq\times\leq_0$, we can find $\sigma\leq\pi$,
      $p'\leq_0 p$ and a countable subset $D'\subseteq D$
      such that $$\{(\rho,a)\in D: (\rho,a)\mbox{ is
        incompatible with }(\sigma,p')\}\subseteq D'.$$ Let
      $A_p=\{a\in P: \exists \rho\in R\ (\rho,a)\in D'\}$.
      Put $\pi'=\sigma\cup\{p'\}$.
    \end{proof}
    Take now any $\pi\in R$. Using Sublemma \ref{sublemma}
    and a bookkepping argument we find a sequence
    $\langle\pi_n\in R:n<\omega\rangle$ such that
    $\pi_0=\pi$ and for each $n<\omega$ and $p\in\pi_n$
    there is $m_p>n$, $p'\in\pi_{m_p}$ such that $p'\leq_0
    p$ and there is a countable $A_p\subseteq P$ such that
    \begin{displaymath}\label{antichain}
      \pi_m\Vdash \{a\in\dot A: a\mbox{ is incompatible with
      } p'\}\subseteq
      A_p.\tag{$\star\star$}
    \end{displaymath}

    For each $p\in\bigcup_{n<\omega}\pi_n$ construct a
    sequence $p_n\in P$ such that $p_0=p'\in\pi_{m_p}$ and
    if $p_n\in\pi_m$, then $p_{n+1}\in\pi_{m+1}$ is such
    that $p_{n+1}\leq_0 p_n$. Let $r_p$ be any condition
    such that $r_p\leq_0 p_n$ for all $n<\omega$..

    We define $\pi_\omega$ as the family of all such $r_p$
    for $p\in\bigcup_{n<\omega}\pi_n$. Note that
    $\pi_\omega\leq\pi_n$ for each $n$ and by
    (\ref{predense}) and (\ref{antichain}) we have that
    $$\pi_\omega\Vdash \bigcup\{A_p:
    p\in\bigcup_{n<\omega}\pi_n\}\mbox{ is predense in }\dot
    Q.$$ This contradicts the assumption that $\dot A$ is
    forced to be uncountable.
  \end{proof}   
  This ends the proof of the implication
  (ii)$\Rightarrow$(i).
\end{proof}

Now we prove Corollary \ref{original}.

\begin{proof}[Proof of Corollary \ref{original}]
  Recall the example \cite[Chapter XVII, Observation
  2.12]{shelah:proper} of two proper forcing notions whose
  product collapses $\omega_1$.  The first of them is
  $\sigma$-closed and the other is an iteration of the form
  ccc\,$*$\,$\sigma$-closed\,$*$\,ccc. Thus, the latter does
  not satisfy strong Axiom A but is forcing-equivalent to an
  Axiom A forcing, since it is $<\!\!\omega_1$-proper. It is
  not embeddable into \sigmaclosedccc\ by Theorem
  \ref{strongaa}.
\end{proof}

\section{Remaining questions}
\label{sec:questions}

There are a couple of questions which this papers leaves
open.

\begin{question}
  Is \textnormal{BFA(\sigmaclosedccc)} equivalent to
  \textnormal{\baafa}?
\end{question}

\begin{question}
  Is strong Axiom A equivalent to the fact that the product
  with every $\sigma$-closed forcing is
  $<\!\!\omega_1$-proper?
\end{question}

\begin{question}
  Does Theorem \ref{separationprinciples} hold for
  \textnormal{WCG$_\alpha$} in place of
  \textnormal{TWCG$_\alpha$}?
\end{question}

\begin{question}
  Does \textnormal{BFA(\sigmaclosedccc)} imply
  $2^{\aleph_0}=\aleph_2$?
\end{question}

\bibliographystyle{plain}
\bibliography{refs}

\end{document}